







\documentclass[12pt]{amsart}
\usepackage{xcolor} 
\usepackage{graphicx} 
\usepackage{enumerate}

\usepackage{amsmath}
\usepackage{amssymb}
\usepackage{amsrefs}

\DeclareMathSymbol\HH 0{AMSb}{`H}
\DeclareMathSymbol\I  0{AMSb}{`I}
\DeclareMathSymbol\R  0{AMSb}{`R}

\def\meso{\rlap{\vrule height -1pt width 5pt depth 2pt}\mathbf{P}}

\theoremstyle{plain}
\newtheorem{theorem}{Theorem}[section]
\newtheorem{lemma}[theorem]{Lemma}
\newtheorem{proposition}[theorem]{Proposition}
\newtheorem{corollary}[theorem]{Corollary}
\newtheorem{definition}[theorem]{Definition}

\theoremstyle{definition}

\newtheorem{claim}{Claim}

\theoremstyle{remark}

\newtheorem{fact}{Fact}

\numberwithin{equation}{section}

\DeclareMathOperator{\cf}{cf}
\DeclareMathOperator{\dom}{dom}

\DeclareMathOperator{\sakne}{stem}

\newcommand{\cprec}{\mathop{<\!\!{\circ}}}

\title{On the cofinality of the splitting number}

\author[A. Dow]{Alan Dow}
\address{Department of Mathematics,
University of North Carolina at Charlotte, 
Charlotte, NC 28223}
\email{adow@uncc.edu}
\author[S. Shelah]{Saharon Shelah}
\address{Department of Mathematics, Rutgers University, Hill Center,
 Piscataway, 
 New Jersey, U.S.A. 08854-8019}
\curraddr{Institute of Mathematics\\Hebrew University\\
Givat Ram, Jerusalem 91904, Israel}
\email{shelah@math.rutgers.edu}

\date{\today}

\thanks{The research of the first author was supported by
 the NSF grant No. NSF-DMS 1501506.
The research of the second  author was
 supported by
   the United States-Israel Binational Science Foundation 
(BSF Grant
   no. 2010405), and by the 
   NSF grant No. NSF-DMS 1101597.
 Paper no. 1127 on Shelah's publication list.}

\keywords{
splitting number,  cardinal invariants of the continuum, matrix forcing}

\subjclass{ 03E15 }

\begin{document}
\begin{abstract}
The splitting number $\mathfrak s$ can be singular.
The key method is to construct a forcing poset with finite support
matrix iterations of ccc posets introduced by Blass and the second author
[\textit{Ultrafilters with small generating sets}, {Israel J. Math.},
 \textbf{65},
 {(1989)}]
\end{abstract}
\maketitle

\bibliographystyle{plain}

\section{Introduction}

The cardinal invariants of the continuum discussed in this
article are very well known (see 
\cite{vDHandbook}*{van Douwen, p111})
so we just give a brief reminder.  They deal with the mod finite 
ordering of the infinite subsets of the integers. 
A set $S\subset \omega$
 is \textit{unsplit} by a family $\mathcal Y\subset [\omega]^{\aleph_0}$ 
if $S$ is mod finite contained in one member of $\{ Y, \omega\setminus
Y\}$ for each $Y\in \mathcal Y$. 
The splitting number
 $\mathfrak s$ is the minimum cardinal of a family $\mathcal Y$ for
 which there is no infinite set unsplit by $\mathcal Y$ (equivalently 
every  $S\in [\omega]^{\aleph_0}$  is \textit{split} by some member of
$\mathcal Y$). It is mentioned in \cite{BrendleSingular} that it is
currently unknown if $\mathfrak s$ can be a singular cardinal.

\begin{proposition}
The cofinality of the splitting number is not countable.
\end{proposition}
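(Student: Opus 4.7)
The plan is to argue by contradiction: assume $\cf(\mathfrak{s}) = \omega$ and fix a splitting family $\mathcal{Y} = \bigcup_{n<\omega} \mathcal{Y}_n$ of size $\mathfrak{s}$ with $|\mathcal{Y}_n| < \mathfrak{s}$ for each $n$ (replacing $\mathcal{Y}_n$ by $\bigcup_{k \le n} \mathcal{Y}_k$ if convenient). The goal is to manufacture an infinite $S \subseteq \omega$ unsplit by $\mathcal{Y}$, contradicting the assumption that $\mathcal{Y}$ splits every infinite set.

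The key ingredient is the restriction-invariance of $\mathfrak{s}$: if $|\mathcal{Z}| < \mathfrak{s}$ and $R \in [\omega]^{\aleph_0}$, then some infinite $T \subseteq R$ is unsplit by $\mathcal{Z}$. This is immediate by transporting $\mathcal{Z}$ through a bijection $R \to \omega$, since otherwise the images $\{\phi(Z\cap R):Z\in\mathcal{Z}\}$ would form a splitting family on $\omega$ of cardinality below $\mathfrak{s}$, contradicting the definition of $\mathfrak{s}$.

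Using this observation, I recursively build a decreasing sequence $\omega = S_{-1} \supseteq S_0 \supseteq S_1 \supseteq \cdots$ of infinite sets such that no member of $\mathcal{Y}_n$ splits $S_n$. Then I diagonalize to a pseudo-intersection: pick $s_n \in S_n \setminus \{s_0,\dots,s_{n-1}\}$ and set $S = \{s_n : n<\omega\}$. Since $\{s_k : k \ge n\} \subseteq S_n$, we have $S \subseteq^* S_n$ for every $n$.

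To close the argument, let $Y \in \mathcal{Y}$ be arbitrary and pick $n$ with $Y \in \mathcal{Y}_n$. Because $Y$ does not split $S_n$, either $S_n \subseteq^* Y$ or $S_n \subseteq^* \omega \setminus Y$; combined with $S \subseteq^* S_n$, this yields the same conclusion for $S$, so $Y$ does not split $S$. Hence $\mathcal{Y}$ fails to split the infinite set $S$, the desired contradiction. The only place the definition of $\mathfrak{s}$ is used in an essential way is the restriction-invariance lemma of the second paragraph; the recursion and the pseudo-intersection step are entirely routine, so no real obstacle is expected.
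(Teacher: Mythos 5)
Your proof is correct and takes essentially the same approach as the paper's: decompose the putative splitting family into countably many pieces of size below $\mathfrak{s}$, recursively build a decreasing chain of infinite sets unsplit by successive pieces, and pass to a pseudo-intersection. The only difference is presentational — you explicitly isolate and prove the restriction-invariance lemma (that any infinite $R$ has an infinite subset unsplit by a family of size $<\mathfrak{s}$, via a bijection), whereas the paper takes that step for granted.
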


\begin{proof}
Assume that $\theta$ is the supremum of $\{\kappa_n : n\in \omega\}$
and that there is no splitting family of cardinality less
than $\theta$. Let 
$\mathcal Y = \{ Y_\alpha : \alpha < \theta\}$ be a
family of subsets of $\omega$. Let $S_0 = \omega$ and by induction on $n$,
 choose an infinite subset $S_{n+1}$ of $S_n$ so that
 $S_{n+1}$ is not split by the family $\{ Y_\alpha : \alpha <\kappa_n\}$. 
If $S$ is any pseudointersection of $\{ S_n : n\in \omega\}$, then 
 $S$ is not split by any member of $\mathcal Y$.
\end{proof}

One can easily generalize the previous result and proof to show that 
the cofinality of the splitting number is at least $\mathfrak t$. In
this paper  we prove the following.

\begin{theorem} If $\kappa$ is any uncountable\label{mainth}
 regular cardinal, 
 then there is a $\lambda>\kappa$ with $\cf(\lambda) = \kappa$ and a 
ccc forcing $\mathbb P$ satisfying that $\mathfrak s = \lambda$
in the forcing extension.
\end{theorem}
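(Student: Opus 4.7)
The plan is to construct $\mathbb P$ as a finite support matrix iteration of ccc posets of the Blass--Shelah type advertised in the abstract. Fix a strictly increasing cofinal sequence $\langle \lambda_\xi : \xi < \kappa\rangle$ of regular cardinals with $\sup_\xi \lambda_\xi = \lambda$ and $\kappa < \lambda_0$. Arrange $\mathbb P$ at the top right corner of a matrix $\langle P_{\xi,\alpha} : \xi \leq \kappa,\ \alpha \leq \delta\rangle$ of ccc posets, where $\delta$ is a sufficiently long column length (for instance $\delta = \lambda^+$), each row $\langle P_{\xi,\alpha} : \alpha \leq \delta\rangle$ is a finite support iteration, and $P_{\xi,\alpha}$ completely embeds into $P_{\xi',\alpha}$ whenever $\xi \leq \xi'$.

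The goal $\mathfrak s = \lambda$ splits into two halves. The easy half, $\mathfrak s \leq \lambda$, is to check that the splitting reals added along the column iteration at row $\kappa$ assemble into a splitting family of size $\lambda$. The difficult half, $\mathfrak s \geq \lambda$, amounts to showing that every $\mathcal Y \subseteq [\omega]^\omega$ of size $\mu < \lambda$ in $V[G]$ admits an infinite set unsplit by $\mathcal Y$. To achieve this, I would use bookkeeping along the $\xi$-th row to construct a ``reaping'' family $\mathcal R_\xi$ of size $\lambda_\xi$ with the property that in the final extension every family of size less than $\lambda_\xi$ is avoided by some element of $\mathcal R_\xi$. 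Then given $\mathcal Y$ of size $\mu < \lambda$ in $V[G]$, pick $\xi < \kappa$ with $\lambda_\xi > \mu$ and extract from $\mathcal R_\xi$ the required unsplit set.

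The essential reason to use a matrix iteration rather than a linear one is that each $\mathcal R_\xi$ must continue to be reaping while the later rows $\xi' > \xi$ add still more reals which might otherwise split its elements. The matrix structure coordinates the generics at each column $\alpha$ across all rows, so that moving from row $\xi$ to row $\xi'$ corresponds to forcing with a quotient carefully chosen not to destroy earlier reaping witnesses. This is the mechanism Blass and the second author used to construct $P$-points with small character, and it is what should allow the $\mathcal R_\xi$ to ``stack up'' at the cofinally many levels of $\lambda$ without any single level providing a splitting family.

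The main obstacle, and the real technical heart of the argument, will be the preservation lemma: that the column forcings and the matrix-level quotients together preserve the reaping property of each $\mathcal R_\xi$ through all subsequent rows and columns. I would prove this by double induction along the matrix, imitating the Blass--Shelah technique of committing, when one extends at column $\alpha$ from row $\xi$ to row $\xi'$, to an end-extension of a partial generic already chosen at row $\xi$, so that no later condition can split a previously committed reaping set. Granted this preservation, the lower bound on $\mathfrak s$ is finished by a reflection argument: by the ccc of $P_{\kappa,\delta}$, the regularity of $\kappa$, and the choice of $\delta$, any $\mathcal Y$ of size $\mu < \lambda$ in $V[G]$ has nice names coded below some $P_{\xi,\alpha}$ with $\lambda_\xi > \mu$, whereupon the preserved $\mathcal R_\xi$ supplies the required unsplit infinite set, completing the proof that $\mathfrak s = \lambda$ and $\cf(\lambda) = \kappa$ in $V[G]$.
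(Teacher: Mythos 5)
Your high-level framework — a $(\kappa+1)\times\delta$ matrix of ccc finite-support iterations, a cofinal sequence $\langle\lambda_\xi : \xi<\kappa\rangle$ indexed by the rows, and the Blass--Shelah complete-embedding machinery to step from row to row — matches the paper's architecture. But the way you have apportioned the difficulty between the two halves of $\mathfrak s=\lambda$ is essentially inverted, and the key mechanism for the lower bound is misidentified.

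For the lower bound $\mathfrak s\ge\lambda$, the paper does not build and then preserve a ``reaping family $\mathcal R_\xi$ of size $\lambda_\xi$.'' That cannot work by cardinal arithmetic: there are $2^{<\lambda_\xi}$ candidate families of size $<\lambda_\xi$, so a family of only $\lambda_\xi$ sets could not supply an unsplit witness for each. Instead the paper handles one prospective splitting family $\mathcal Y$ (of size $<\lambda$) at a time via bookkeeping of length $\theta$ with $\theta^{<\lambda}=\theta$: at a stage where $\mathcal Y$ consists of nice names, it forces with the Laver-style poset $\mathbb L(\mathfrak D)$ for a carefully chosen filter $\mathfrak D$ of small character (interleaved cardinals $\mu_\xi=(2^{\sup_{\eta<\xi}\lambda_\eta})^+$ are baked in precisely to give the room you need), and the generic real is $\subseteq^*$-decided against every $Y\in\mathcal Y$. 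That statement is $\Sigma_1$ and hence upward absolute, so no preservation through later stages is needed. What actually requires care for the lower bound is not preservation but \emph{coherence across rows}: the filters $\dot{\mathfrak D}_i$ used at row $i$ must form a $\vec\lambda$-thin sequence (Definition~\ref{lambdathin}, Lemma~\ref{laver}) so that each $\mathbb P_{i,\alpha}*\mathbb L(\dot{\mathfrak D}_i)$ is a complete suborder of $\mathbb P_{j,\alpha}*\mathbb L(\dot{\mathfrak D}_j)$ for $i<j$; producing such filter sequences that also measure a given $\mathcal Y$ is the technical heart of the paper (Lemmas~\ref{nodirection}--\ref{existsthin}). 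Your phrase ``committing to an end-extension of a partial generic'' gestures at this but does not isolate the obstruction.

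Conversely the half you call easy — $\mathfrak s\le\lambda$ — is exactly where the paper's genuine preservation theorem sits. The splitting family is the block of Cohen reals added at column $0$ (so $\operatorname{Fn}(\lambda_\xi,2)$ at row $\xi$), which is a $\lambda_\xi$-Luzin family in $V[G_{\xi,1}]$; one must show the later iterands keep it $\lambda_\xi$-Luzin so that it still splits in the final model. That is Lemmas~\ref{luzin} and \ref{laverpreserves}, and it is what forces the choice of small filters for the Laver steps. In short: you have the matrix skeleton and the Blass--Shelah lineage right, but you omit the two iterand types (Cohen plus $\mathbb L(\mathfrak D)$), you direct the preservation effort at the wrong half, and the cardinal bookkeeping ($\lambda_\xi$ alone, $\delta=\lambda^+$) is too tight compared to what is actually used.
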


To prove the theorem,  we construct $\mathbb P$
 using matrix iterations.  
 
 \section{A special splitting family}
 
\begin{definition} Let us say that a family
  $\{ x_i  : i\in I\} \subset [\omega]^\omega$ is $\theta$-Luzin (for an
uncountable
  cardinal $\theta$) if for each $J\in [I]^\theta$, 
 $\bigcap \{x_i : i\in J\}$ is finite
   and $\bigcup \{ x_i : i\in J\}$ is cofinite.
   \end{definition}
   
Clearly a family is $\theta$-Luzin if every $\theta$-sized subfamily is
 $\theta$-Luzin.
   We leave to the reader the easy verification that for a regular
   uncountable cardinal $\theta$, each $\theta$-Luzin family
   is a splitting family. A poset being $\theta$-Luzin preserving will
   have the obvious meaning. For example, any poset of cardinality
   less than a regular cardinal 
 $\theta$ is $\theta$-Luzin preserving.
   
\begin{lemma}
 If $\theta$ is  a regular uncountable \label{luzin} cardinal then 
any   ccc finite
 support iteration of $\theta$-Luzin preserving posets
is again $\theta$-Luzin preserving.
\end{lemma}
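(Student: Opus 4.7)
The plan is to prove the lemma by induction on the length $\delta$ of the iteration $\langle \mathbb{P}_\alpha, \dot{\mathbb{Q}}_\alpha : \alpha < \delta\rangle$, fixing a $\theta$-Luzin family $\{x_i : i \in I\}$ in $V$. The successor step $\delta = \alpha + 1$ is a routine two-step composition: by induction $\mathbb{P}_\alpha$ preserves the family, and the hypothesis that $\dot{\mathbb{Q}}_\alpha$ is forced to be $\theta$-Luzin preserving in $V^{\mathbb{P}_\alpha}$ carries it the last step. Everything nontrivial happens at a limit $\delta$, where I would split on $\cf(\delta)$.

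At a limit, fix a $\mathbb{P}_\delta$-name $\dot{J}$ for a $\theta$-sized subset of $I$ and a continuous cofinal sequence $\langle \delta_\xi : \xi < \cf(\delta)\rangle$. Because the iteration has finite support, every condition of $\mathbb{P}_\delta$ already sits in some $\mathbb{P}_{\delta_\xi}$, so the $\mathbb{P}_{\delta_\xi}$-name $\dot{J}_\xi := \{i \in I : (\exists p \in \dot{G}_{\delta_\xi})\, p \Vdash_{\mathbb{P}_\delta} i \in \dot{J}\}$ satisfies $\Vdash_{\mathbb{P}_\delta} \dot{J} = \bigcup_\xi \dot{J}_\xi$. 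If $\cf(\delta) < \theta$, the regularity of $\theta$ forces some $\dot{J}_\xi$ to have size $\theta$ (a union of fewer than $\theta$ sets each of size less than $\theta$ has size less than $\theta$); the induction hypothesis applied to $\mathbb{P}_{\delta_\xi}$ then gives that $\bigcap_{i \in \dot{J}_\xi} x_i$ is finite and $\bigcup_{i \in \dot{J}_\xi} x_i$ is cofinite in $V^{\mathbb{P}_{\delta_\xi}}$, and both properties are absolute upwards to $V^{\mathbb{P}_\delta}$ and pass to $\dot{J} \supseteq \dot{J}_\xi$.

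The substantive case is $\cf(\delta) \geq \theta$. I would argue by contradiction: suppose $q \in \mathbb{P}_\delta$ forces $\bigcap_{i \in \dot{J}} x_i$ is infinite, let $G$ be a generic containing $q$, and pick an infinite $Z \in V[G]$ with $Z \subseteq \bigcap_{i \in J} x_i$. Since $Z$ is a real, the countable antichains in its nice name have total support a countable subset of $\delta$, which (using $\cf(\delta) > \omega$) is bounded by some $\alpha < \delta$; hence $Z \in V[G_\alpha]$. The induction hypothesis gives that $\{x_i : i \in I\}$ is $\theta$-Luzin in $V[G_\alpha]$, so $J^Z := \{i \in I : Z \subseteq x_i\}$, which also lies in $V[G_\alpha]$, must satisfy $|J^Z| < \theta$ --- otherwise $\theta$-Luzinness would yield a finite $\bigcap_{i \in J^Z} x_i$ containing the infinite $Z$. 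But $J \subseteq J^Z$ by construction, giving $|J^Z| \geq |J| = \theta$ in $V[G]$ and, by ccc cardinal preservation, also in $V[G_\alpha]$. Contradiction. The cofinite-union clause is handled dually by running the same argument on the (also $\theta$-Luzin) family $\{\omega \setminus x_i : i \in I\}$.

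The principal obstacle is the boundary subcase $\cf(\delta) = \theta$: the pigeonhole decomposition has $\theta$ pieces, each of which can genuinely be forced to have size less than $\theta$, so $\dot{J}$ need not be reflected into any intermediate extension. The diagonal reflection above sidesteps this because a single infinite \emph{witness} $Z$ to the supposed failure is only a real, and a real does live in some $V[G_\alpha]$ with $\alpha < \delta$, where the induction hypothesis becomes available.
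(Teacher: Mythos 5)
Your proof is correct, and for the hard limit case it takes a genuinely different route from the paper's. Both proofs begin the same way: induction on length, a routine successor step, and at limits of small cofinality a pigeonhole argument showing that a $\theta$-sized piece of $\dot J$ already appears at some intermediate stage. (The paper phrases this via conditions $p_i\Vdash i\in\dot J$ and runs it whenever $\cf(\delta)\neq\theta$, including $\cf(\delta)>\theta$; you phrase it via the decomposition $\dot J=\bigcup_\xi\dot J_\xi$ and run it only when $\cf(\delta)<\theta$, but each version covers what its case split needs.) The real divergence is at the boundary case $\cf(\delta)=\theta$. The paper's argument there is a hands-on construction: refine the $p_i$ to a $\Delta$-system with root below some $\beta$, pass to $V[G_\beta]$, partition the surviving index set into $\omega_1$ pieces of size $\theta$, apply the induction hypothesis to each piece to extract bounds $m_\xi$, pigeonhole to a single $m$, and then for each $n>m$ explicitly amalgamate $\bar p$ with two conditions $p_{i_0},p_{i_1}$ to get a density argument killing $n$ from the intersection and putting it into the union. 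Your argument replaces all of that with one reflection observation: a supposed infinite witness $Z\subseteq\bigcap_{i\in J}x_i$ is a single real, so by ccc plus finite support plus $\cf(\delta)>\omega$ it has a name whose support is bounded below $\delta$; then $J^Z=\{i\in I: Z\subseteq x_i\}$ lives in $V[G_\alpha]$, the inductive Luzin property there bounds $|J^Z|<\theta$, while $J\subseteq J^Z$ and ccc cardinal preservation force $|J^Z|\geq\theta$ --- contradiction. The dual run on $\{\omega\setminus x_i\}$ (or equivalently on a witness $W\subseteq\omega\setminus\bigcup_{i\in J}x_i$) handles the cofinite-union clause.

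What each approach buys: the paper's argument is constructive (it exhibits an explicit $q<p$ forcing the Luzin conclusion) and its $\Delta$-system/amalgamation technique generalizes to situations where one cannot bound the support of a witness; yours is shorter and cleaner because it leverages the fact that the failure of Luzinness is witnessed by a single real, and reals in a finite-support ccc iteration of uncountable cofinality always reflect below. The one place your write-up leans slightly on the reader is the phrase ``by ccc cardinal preservation, also in $V[G_\alpha]$'': since $J^Z$ is literally the same set in both models, the cleanest way to see the contradiction is that a bijection of $J^Z$ with some $\kappa<\theta$ in $V[G_\alpha]$ persists to $V[G]$, so $|J^Z|^{V[G]}\leq\kappa<\theta$ (using that $\theta$ is not collapsed), directly contradicting $|J^Z|^{V[G]}\geq|J|=\theta$. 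That is exactly what you meant; it is just worth spelling out since the inequality $|J^Z|^{V[G]}\leq|J^Z|^{V[G_\alpha]}$ is the one that does the work, not the other direction. Overall the argument is sound and, to my eye, a genuine simplification of the paper's treatment of the critical case.
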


\begin{proof}
 We prove this by induction on the length of the iteration. 
 Fix any $\theta$-Luzin family $\{ x_i : i\in I\}$ and let 
  $\langle\langle
   \mathbb P_\alpha :\alpha\leq \gamma\rangle, \langle
      \dot {\mathbb Q}_\alpha : \alpha < \gamma\rangle\rangle$
      be a finite support iteration of ccc posets satisfying
      that $\mathbb P_\alpha$ forces that 
$\dot {\mathbb Q_\alpha}$ is ccc and $\theta$-Luzin
preserving,
      for all $\alpha<\gamma$. 

     If $\gamma$ is a successor ordinal $\beta+1$, 
       then for any  $\mathbb P_\beta$-generic filter $G_\beta$,
the family $\{ x_i : i\in I\}$ is a $\theta$-Luzin family
 in $V[G_\beta]$. 
By the hypothesis on $\dot {\mathbb Q}_\beta$, this family remains
$\theta$-Luzin after further forcing by $\dot {\mathbb Q}_\beta$.

      Now we assume that $\alpha$ is a limit. 
Let $\dot J_0$ be any 
       $\mathbb P_\gamma$-name of a subset of $I$
       and assume that $p\in \mathbb P_{\gamma}$ forces
       that $|\dot J_0| =\theta$.  We must produce a
$q<p$ that forces that $\dot J_0$ is
as in the definition of $\theta$-Luzin.
There is a set $J_1\subset I$
       of cardinality $\theta$ satisfying that, for each $i\in J_1$,
       there is a $p_i<p$ with $p_i\Vdash i\in \dot J_0$. 
The case when
      the cofinality of $\alpha$ not equal to 
 $\theta$ is 
almost immediate. There is a $\beta <\alpha$ such that
 $J_2 = \{ i \in J_1 : p_i\in \mathbb P_\beta\}$ has cardinality
 $\theta$.  
There is a $\mathbb P_\beta$-generic filter $G_\beta$ such
that $J_3 = \{ i \in J_2 : p_i\in G_\beta\}$ has cardinality 
 $\theta$. By the induction hypothesis, the family
 $\{ x_i : i\in I\}$ is $\theta$-Luzin in $V[G_\beta]$ and
 so  we have that 
 $\bigcap \{ x_i : i\in J_3\}$ is finite
 and $\bigcup \{ x_i : i\in J_3\}$ is co-finite. Choose
any $q<p$ in $G_\beta$ and a name $\dot J_3$ for $J_3$
so that $q$ forces this property for $\dot J_3$.
Since $q$ forces that $\dot J_3\subset \dot J_0$, we have
that $q$ forces the same property for $\dot J_0$.

  Finally  we assume that $\alpha $ has cofinality $\theta$. 
 Naturally
      we may assume that the collection $\{ \dom(p_i) : i \in J_1\}$ 
  forms a $\Delta$-system with root contained in some   $\beta < \alpha$.
Again, we may choose   a 
  $\mathbb P_\beta$-generic filter $G_\beta$ satisfying
   that $J_2 = \{ i\in J_1 : p_i\restriction\beta \in G_\beta \}$ has 
   cardinality $\theta$. 
In $V[G_\beta]$, let $\{ J_{2,\xi} : \xi  \in \omega_1\}$ be a
   partition of $J_2$ into pieces of size $\theta$.
   For each $\xi\in \omega_1$, 
   apply  the induction hypothesis
   in the model $V[G_\beta]$, and so we have
that $\bigcap\{ x_i : i\in J_{2,\xi}\}$ is finite
   and $\bigcup \{x_i : i\in J_{2,\xi}\}$ is co-finite. 
For each $\xi\in \omega_1$ let $m_\xi$ be an integer large enough
so that $\bigcap\{x_i: i\in J_{2,\xi}\}\subset m_\xi$
and $\bigcup \{ x_i  : i \in J_{2,\xi}\}\supset \omega\setminus m_\xi$.
Let $m$ be any integer such that $m_\xi = m$ for uncountably many $\xi$.
   Choose any condition $\bar p\in \mathbb P_{\alpha}$ so that
    $\bar p\restriction \beta\in G_\beta$. We prove
    that for each $n>m$ there is a $\bar p_n<\bar p$ so
    that $\bar p_n \Vdash  n\notin \bigcap \{ x_i : i \in \dot I\}$
    and $\bar p_n \Vdash n\in \bigcup \{ x_i : i\in \dot I\}$. 
    Choose any $\xi\in \omega_1$ so that $m_\xi = m$
    and $\dom(p_i) \cap \dom(\bar p)  \subset \beta$ 
    for all $i\in J_{2,\xi}$.
    Now choose any $i_0\in J_{2,\xi}$ so that $n\notin x_{i_0}$. 
   Next choose a distinct $\xi'$ with $m_{\xi'}=m$ so that
     $\dom(p_i)\cap (\dom(\bar p)\cup \dom(p_{i_0}))\subset \beta$
     for all $i\in J_{2,\xi'}$. Now choose $i_1\in J_{2,\xi'}$
     so that $n\in x_{i_1}$. We now have that 
        $\bar p \cup p_{i_0}\cup p_{i_1}$ is a condition
        that forces $\{i_0,i_1\}\subset \dot I$. 
\end{proof}

Next we introduce a $\sigma$-centered poset that will render a given family
non-splitting.

\begin{definition}
 For a  filter $\mathfrak D$ on $\omega$, we define\label{deflaver}
 the Laver style poset
  $\mathbb L(\mathfrak D)$ to be the set of trees $T\subset \omega^{<\omega}$
  with the property that $T$ has a minimal branching node   $\sakne
(T)$
  and for all $\sakne(T)\subseteq t\in T$, the branching set
   $\{ k : t^\frown k \in T\}$ is an element of $\mathfrak D$.
 If $\mathfrak D$ is a filter base for a filter ${\mathfrak D}^*$,
 then $\mathbb L(\mathfrak D)$ will also denote $\mathbb L({\mathfrak
   D}^*)$. 

The name $\dot L = \{ (k, T) : (\exists t)~~t^\frown k \subset 
\sakne(T)\}$ will be referred to as the canonical name for the real
added by $\mathbb L(\mathfrak D)$.
\end{definition}

If $\mathfrak D$ is a principal (fixed) ultrafilter on $\omega$,
 then $\mathbb L({\mathfrak D})$
has a minimum element and so is forcing isomorphic to the trivial
poset.
If $\mathfrak D$ is principal but not an ultrafilter, then
$\mathbb L(\mathfrak D)$ is isomorphic to Cohen forcing. 
If $\mathfrak D$ is a free filter, then $\mathbb L(\mathfrak D)$
adds a dominating real and has similarities to Hechler forcing.
As usual, for a filter (or filter base)
 $\mathfrak D$ of subsets of $\omega$, we use
$\mathfrak D^+$ to denote the set of all subsets of $\omega$
that meet every member of $\mathfrak D$.

\begin{definition}
If $E$ is a dense subset of $\mathbb L(\mathfrak D)$, then 
 a function $\rho_E $ from $\omega^{<\omega}$ into $\omega_1$ 
is a rank function for $E$ if $\rho_E(t) = 0$ 
if and only if $t = \sakne
(T)$ for some $T\in E$,
and
 for all $t\in \omega^{<\omega}$  and $0<\alpha\in \omega_1$, 
 $\rho_E(t) \leq \alpha$ providing the set\label{rank} 
 $\{ k \in \omega : \rho_E(t^\frown k)<\alpha\}$ is in 
 $\mathfrak D^+$. 
\end{definition}

When $\mathfrak D$ is a free filter, then 
 $\mathbb L(\mathfrak D)$ has cardinality $\mathfrak c$, but
nevertheless, if $\mathfrak D$ has a base of cardinality
less than a regular cardinal $\theta$, 
 $\mathbb L(\mathfrak D)$ is $\theta$-Luzin preserving.

\begin{lemma}
If $\mathfrak D$ is a free filter on $\omega$ and if 
$\mathfrak D$ has a base of\label{laverpreserves}
 cardinality less than a regular uncountable
cardinal $\theta$, then $\mathbb L(\mathfrak D)$ is 
 $\theta$-Luzin preserving.
\end{lemma}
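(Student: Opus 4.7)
My plan is to show that for any $\mathbb{L}(\mathfrak D)$-name $\dot J$ with $T_0 \Vdash |\dot J| = \theta$, there exist $T' \leq T_0$ and a ground-model set $J^* \in [I]^\theta$ with $T' \Vdash J^* \subseteq \dot J$. This suffices: $\{x_i : i \in J^*\}$ is then a $\theta$-sized subfamily of a $\theta$-Luzin family, hence itself $\theta$-Luzin in $V$, and the Luzin property for $\dot J \supseteq J^*$ follows from $\bigcap_{i \in \dot J} x_i \subseteq \bigcap_{i \in J^*} x_i$ being finite and $\bigcup_{i \in \dot J} x_i \supseteq \bigcup_{i \in J^*} x_i$ being cofinite.

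For the setup I choose, for each $i$ in some $J_1 \in [I]^\theta$, a condition $T_i \leq T_0$ with $T_i \Vdash i \in \dot J$. Since $\omega^{<\omega}$ is countable and $\theta$ is uncountable, I may thin $J_1$ so that all the $T_i$ share a common stem $t^*$. Using that $\mathcal B$ is a base for $\mathfrak D$, I may further shrink each $T_i$ so that every branching set $B_{T_i}(s)$ lies in $\mathcal B$. The strategy for $T' \Vdash J^* \subseteq \dot J$ is then to arrange that $T' \subseteq \bigcap_{i \in J^*} T_i$: for then any $T^{**} \leq T'$ has $\sakne(T^{**}) \in T_i$ for each $i \in J^*$, so $T^{**} \cap T_i$ is a common extension forcing $i \in \dot J$, and density yields $T' \Vdash i \in \dot J$.

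The combinatorial core is to find $J^* \in [J_1]^\theta$ so that the subtree $T' := \bigcap_{i \in J^*} T_i$ is a condition with stem $t^*$, i.e., so that at every $s \supseteq t^*$ in the intersection, $\bigcap_{i \in J^*} B_{T_i}(s) \in \mathfrak D$. The hypothesis $|\mathcal B| < \theta$ enters via pigeonhole at each node: the set $\mathcal C_s := \{B \in \mathcal B : |\{i \in J_1 : B_{T_i}(s) = B\}| = \theta\}$ is non-empty and of cardinality less than $\theta$, and, by regularity of $\theta$ and countability of $\omega^{<\omega}$, the set $J_1' := \{i \in J_1 : B_{T_i}(s) \in \mathcal C_s \text{ for every } s \supseteq t^*\}$ still has cardinality $\theta$.

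The main obstacle will be collapsing the image $\{B_{T_i}(s) : i \in J^*\} \subseteq \mathcal C_s$ to a \emph{finite} subset at every $s$, so that $\bigcap_{i \in J^*} B_{T_i}(s)$ becomes a finite intersection of base elements and therefore belongs to $\mathfrak D$. I expect this to be done by recursive thinning: as $T'$ is built top-down, at each encountered node one further restricts $J^*$ so that only finitely many branching values appear there, arguing via the regularity of $\theta$ and the fact that $T'$ has only countably many relevant nodes that the cumulative restrictions keep $|J^*|$ at $\theta$. The delicate point is that infinitely-many-valued restrictions could drop $|J^*|$ below $\theta$, so the thinning procedure must be carefully coordinated with the choice of $B_{T'}(\cdot)$ along the tree.
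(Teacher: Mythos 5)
Your proposal takes a genuinely different route from the paper. The paper works directly with the name $\dot y = \bigcap_{i\in\dot J} x_i$, fixes a $<\theta$-sized elementary submodel $M$ whose trace on $\mathfrak D$ contains a base, introduces the rank function $\rho_E$ on dense subsets of $\mathbb L(\mathfrak D)$, and shows that no condition forces $x_i\supseteq \dot y$ once $i>\sup(M\cap\theta)$. You instead try to locate a ground-model $\theta$-sized $J^*$ with $T'\Vdash J^*\subseteq\dot J$ by thinning a family of conditions $\{T_i\}$ down to one with a common lower bound. The reduction itself (if such $T'$, $J^*$ exist, the Luzin property follows by density) is fine; the problem lies in the combinatorial core.

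The ``delicate point'' you flag at the end is a genuine gap, and I believe it is fatal to this approach rather than a technicality. Restricting $J^*$ at a node $s$ so that $\bigcap_{i\in J^*}B_{T_i}(s)\in\mathfrak D$ can delete $\theta$-many indices at once, and repeating this at the countably many nodes of $T'$ need not leave a set of size $\theta$; the fibers at different nodes are not nested, and fixing the whole profile $\langle B_{T_i}(s) : s\in\omega^{<\omega}\rangle$ amounts to pigeonholing a map into $\mathcal B^{\omega^{<\omega}}$, whose cardinality $|\mathcal B|^{\aleph_0}$ can exceed $\theta$. In fact the underlying combinatorial claim --- that $\theta$-many conditions with a common stem and branching sets in a base of size $<\theta$ always admit a $\theta$-sized subfamily with a common lower bound --- is false. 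Take $\theta=\omega_1$, $\mathcal B=\{[n,\omega):n<\omega\}$, and a $\leq^*$-increasing unbounded family $\{h_i:i<\omega_1\}$ (available when $\mathfrak b=\omega_1$); set $\sakne(T_i)=\langle\rangle$, $B_{T_i}(\langle\rangle)=\omega$, and $B_{T_i}(s)=[h_i(s(|s|-1)),\omega)$ for $|s|\ge 1$. For any uncountable $J^*$, the family $\{h_i:i\in J^*\}$ is still $\leq^*$-unbounded, so $\{k:\sup_{i\in J^*}h_i(k)=\omega\}$ is infinite; therefore for any candidate $T'$ with stem $t'$, the $\mathfrak D$-large set $B_{T'}(t')$ contains such a $k$, and then $\bigcap_{i\in J^*}B_{T_i}(t'^\frown k)=\emptyset$, so $T'$ cannot lie below all $T_i$. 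Since Lemma~\ref{laverpreserves} is a ZFC theorem but your key claim already fails under $\mathfrak b=\omega_1$, no amount of cleverer thinning will save the argument; you need to work with the name itself, as the paper does via the rank on dense sets, rather than trying to find a ground-model $\theta$-sized subset of $\dot J$.
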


\begin{proof}
Let $\{ x_i : i\in \theta\}$ be a $\theta$-Luzin family
with $\theta$ as in the Lemma. Let $\dot J$ be a
 $\mathbb L(\mathfrak D)$-name of a subset of 
$\theta$. We prove that if
$\bigcap \{ x_i : i\in \dot J\}$ is not finite,
then $\dot J$ is bounded in $\theta$. By symmetry,
 it will also prove that if $\bigcup\{x_i : i\in \dot J\}$
is not cofinite, then $\dot J$ is bounded in $\theta$.
 Let $\dot y$ be the 
 $\mathbb L(\mathfrak D)$-name of the intersection,
and let  $T_0$ be any member of $\mathbb L(\mathfrak D)$
that forces that $\dot y$ is infinite.
Let $M$ be any $<\theta$-sized elementary submodel of 
 $H((2^{\mathfrak c})^+)$ such that $T_0,\mathfrak D$, $\dot J$,
and $\{ x_i :  i\in \theta\}$ are all members of $M$
and such that $M\cap \mathfrak D$ contains a base for $\mathfrak D$.  
Let $i_M = \sup(M\cap \theta)$.  If $x\in M\cap [\omega]^\omega$, 
 then $I_x = \{ i\in \theta : x\subset x_i\}$ is an element
of $M$ and has cardinality less than $\theta$. Therefore,
 if $i\in \theta\setminus i_M$,
 then $x_i$ does not contain any infinite subset
of $\omega$ that is an element of $M$. We prove
that $x_i$ is forced by $T_0$ to also not contain $\dot y$.
 This will prove that $\dot J$ is bounded by $i_M$.
Let $T_1<T_0$ be any condition in $\mathbb L(\mathfrak D)$
and let $t_1 = \sakne
(T_1)$. We show that 
 $T_1$ does not force that $x_i\supset \dot y$.
We define the relation $\Vdash_w$ on $T_0\times \omega$
to be the set
$$ \{ (t,n)\in T_0\times\omega : ~\mbox{there is no}~~T\leq T_0,
\sakne
(T)=t,   \mbox{s.t.}\ T\Vdash n\notin \dot y\}~.$$ 
For convenience we may write, for $T\leq T_0$,
$T\Vdash_w n\in \dot y$ providing $(\sakne
(T),n)$ is in
$\Vdash_w$, and this is equivalent to the relation
that $T$ has no stem  preserving extension 
forcing that $n$ is not in $\dot y$. Let $T_2\in M$
be any extension of $T_0$ with stem  $t_1$. 
 Let $L$ denote the set of $\ell\in \omega$ such that 
 $T_2 \Vdash_w \ell\in \dot y$. 
 If $L$ is infinite,
 then, since $L\in M$,
 there is an $\ell\in L\setminus x_i$. This implies 
that $T_1$ does not force $x_i\supset \dot y$,
since $T_2\Vdash_w j\in \dot y$ implies that
$T_1$ fails to force that $\ell \notin \dot y$.

Therefore we may assume that 
 $L$ is finite  
   and let $\ell$ be the maximum of $L$. 
Define the set $E\subset \mathbb L(\mathfrak D)$ according
 to $T\in E$ providing that either $t_1\notin T$ 
or  there is a $j>\ell$ such that
  $T \Vdash_w j\in \dot y$. Again this set $E$ is in $M$
and is easily seen to be a dense subset of $\mathbb L(\mathfrak D)$. 
By the choice of $\ell$, we note that $\rho_E(t_1) >0$.
If $\rho_E(t_1) > 1$, then
the set $\{ k\in \omega : 0<\rho_E(t_1^\frown k) < \rho_E(t_1)\}$
is in $\mathfrak D^+$ and so there is a $k_1$  in this set
such that $t_1^\frown k_1 \in T_1\cap T_2$. By a finite induction, we
can choose an extension $t_2\supseteq t_1$ so that 
 $t_2 \in T_1\cap T_2$ and $\rho_E(t_2) = 1$. Now, there is a set
 $D\in \mathfrak D\cap M$ contained in 
$ \{ k : t_2^\frown k \in T_1\cap T_2\}$ since $M$ contains a base
for $\mathfrak D$. Also, $D_E = \{ k\in D : 
 \rho_E(t_2^\frown k) = 0\}$ is in $\mathfrak D^+$. 
For each $k\in D_E$, 
 choose the minimal $j_k$ so that $T_2 ^\frown k \Vdash j_k\in \dot
 y$. The set $\{ j_k : k\in D_E\}$ is an element of $M$. This set
is not finite because if it were then there would be a single
 $j$ such that $\{ k\in D_E : j_k = j\}\in \mathcal D^+$, 
which would contradict that $\rho_E(t_2) >0$. This means
that there is a $k\in D_E^+$ with $j_k\notin x_i$, 
 and again we have shown that $T_1$ fails to force that
 $x_i$ contains $\dot y$.
\end{proof}

\section{Matrix Iterations}

The terminology ``matrix iterations'' is used in  
 \cite{BrendleFischer}, see also forthcoming preprint 
(F1222) from the second author.
The paper \cite{BrendleFischer}  nicely expands on the
method  of matrix iterated forcing first introduced
in \cite{BlassShelah}. 

Let us recall that a poset $(P,<_P)$ is a complete suborder of a 
poset $(Q,<_Q)$ providing $P\subset Q$, ${<_P}\subset{<_Q}$,
and each maximal antichain of $(P,<_P)$ is also
a maximal antichain of $(Q,<_Q)$. 
Note that it follows that incomparable members of $(P,<_P)$
are still incomparable in $(Q,<_Q)$, i.e. $p_1  \perp_P p_2$ 
implies $p_1 \perp_Q p_2$.
We use the notation
$(P,<_P) <\!\!{\circ }\  (Q,<_Q)$ to abbreviate the complete suborder
relation,  
and similarly use $P<\!\!{\circ }\  Q$ 
if $<_P$ and $<_Q$ are clear from the context.
 An element $p$ of $P$
is a reduction of $q\in Q$ if $r\not \perp_Q q$ for
each $r<_P p$. If $P\subset Q$, $<_P\subset <_Q$,
$\perp_P~ \subset~ \perp_Q$, and each element of $Q$
has a reduction in $P$, then $P <\!\!{\circ }\ Q$.
The reason is that if $A\subset P$ is a maximal antichain
and $p\in P$ is a reduction of $q\in Q$, then there is
an $a\in A$ 
and an $r$ less than both $p$ and $a$  in $P$, such that $r\not
\perp_Q q$.

\begin{definition}
We will say that an object $\meso$ is a 
matrix iteration if there is an infinite cardinal $\kappa$ 
and an ordinal $\gamma$ (thence a $(\kappa ,\gamma)$-matrix
iteration) 
such that 
  $\meso = 
\langle
\langle {\mathbb P}^{\meso}_{i,\alpha} 
 :  i\leq \kappa  ,
\alpha \leq \gamma\rangle , 
\langle \dot{\mathbb Q}^{\meso}_{i,\alpha} 
 : i\leq \kappa, \alpha < \gamma\rangle \rangle$
where, for each $(i,\alpha)\in \kappa  + 1\times \gamma$
and 
 each $j<i$,
\begin{enumerate}
\item ${\mathbb P}^{\meso}_{j,\alpha}$ is a complete
suborder  of the poset ${\mathbb P}^{\meso}_{i,\alpha}$ (i.e.
  ${\mathbb P}^{\meso}_{j,\alpha} {<\!\!{\circ }}\ {\mathbb
    P}^{\meso}_{i,\alpha}$),
\item  $\dot {\mathbb Q}^{\meso}_{i,\alpha}$ is a 
 ${\mathbb P}^{\meso}_{i,\alpha}$-name of a ccc poset,
 ${\mathbb P}^{\meso}_{i,\alpha+1}$ is equal to ${\mathbb
   P}^{\meso}_{i,\alpha}  
* \dot{\mathbb Q}^{\meso}_{i,\alpha}$, 
\item for limit $\delta\leq \gamma $,
 ${\mathbb P}^{\meso}_{i,\delta }$ is equal to the union of the family
 $\{ {\mathbb P}^{\meso}_{i,\beta} : \beta < \delta\}$ 
\item  ${\mathbb P}^{\meso}_{\kappa,\alpha}$ is the union of the chain
 $\{ {\mathbb P}^{\meso}_{j,\alpha} : j< \kappa\}$.
\end{enumerate}
\end{definition}

When the context makes it clear, we omit the superscript
 $\meso$ when discussing a matrix iteration.
Throughout the paper, $\kappa$ will be a fixed 
uncountable regular cardinal

\begin{definition}
A sequence $\vec\lambda$ is $\kappa$-tall if 
  $\vec\lambda = \langle \mu_\xi, \lambda_\xi : \xi <\kappa\rangle$ is
a  sequence of pairs of
 regular cardinals satisfying that $\mu_0=\omega<
\kappa <
  \lambda_0$ and, for $0<\eta<\kappa$,
${\mu_\eta} < \lambda_{\eta}$
where
  $\mu_\eta = (2^{\sup\{\lambda_\xi: \xi<\eta\}})^+$.
\end{definition}

Also for the remainder of the paper, we
 fix a $\kappa$-tall sequence $\vec\lambda $
and  $\lambda$ will denote the supremum of the set 
 $\{ \lambda_\xi :\xi \in \kappa\}$. 
For simpler notation, whenever we discuss a  matrix
iteration $\meso$ we shall henceforth
assume that it is a $(\kappa,\gamma)$-matrix
iteration
for some ordinal $\gamma$.  We may refer to a forcing extension
by $\meso$ as an abbreviation for the forcing extension by
 $\mathbb P^{\meso}_{\kappa,\gamma}$.

\bigskip

For  any poset  $P$,  any $P$-name $\dot D$,
 and $P$-generic filter $G$, $\dot D[G]$ will denote
 the valuation of $\dot D$ by $G$. For any ground model 
 $x$, $\check x$ denotes the canonical name so
that $\check x[G] = x$. When $x$ is an ordinal (or an integer) we will
suppress the accent in $\check x$.
A 
$P$-name $\dot D$ 
of a subset of $\omega$
 will be said to be \textit{nice}\/ or
 \textit{canonical}\/
if
for each integer $j\in \omega$, there is an antichain
 $A_j$ such that $\dot D = \bigcup\{ \{j\}\times A_j : j\in
 \omega\}$.
We will say that $\dot{\mathfrak D}$ is a nice $ P$-name
of a family of subsets of $\omega$
 just to mean that $\dot{\mathfrak D}$ is a collection
 of nice $P$-names of subsets of $\omega$.
 We will use $(\dot{\mathfrak D})_P$ if we need to emphasize
 that we mean the $P$-name. 
Similarly if we say that $\dot{\mathfrak D}$ is a nice $P$-name
 of a filter (base) we mean that $\dot{\mathfrak D}$ is a
 nice $P$-name such that, for each $P$-generic
 filter, the collection $\{ \dot D[G] : \dot D\in 
\dot{\mathfrak D}\}$ is a 
 filter (base) of infinite subsets of $\omega$.

Following these conventions, the following notation will be helpful.

\begin{definition}
  For a $(\kappa,\gamma)$-matrix\label{defineB}
$\meso$ and $i<\kappa $,  we let
${\mathbb B}_{i,\gamma}^{\meso}$ denote the set of all nice
${\mathbb P}_{i ,\gamma}^{\meso}$-names of subsets of $\omega$.
We note that
 this then is the nice ${\mathbb P}_{i,\gamma}^{\meso}$-name
for the power set of $\omega$.
As usual, when possible we suppress the $\meso$ superscript.
\end{definition}

For a  nice $\meso$-name 
$\dot{\mathfrak D}$
of a filter (or filter base) of subsets of $\omega$, we let
$(\dot{\mathfrak D})^+$ denote the set of all nice $\meso$-names
that are forced to meet every member of $\dot{\mathfrak D}$. It follows
that $(\dot{\mathfrak D})^+$ is the nice $\meso$-name
for the usual 
defined notion $(\dot{\mathfrak D})^+$ in the forcing extension
by $\meso$. We 
let $\langle\dot{\mathfrak D}\rangle$ 
denote the nice $\meso$-name of the filter generated by
$\dot{\mathfrak D}$.  We use the same notational conventions if,
for some poset $\mathbb P$, 
$\dot{\mathfrak D}$ is a nice $\mathbb P$-name
of a filter (or filter base) of subsets of $\omega$.

The main idea for controlling  the splitting 
number in the extension by $\meso$ will involve having
many of the subposets being $\theta$-Luzin preserving
for $\theta\in \{\lambda_\xi : \xi\in \kappa\}$. Motivated
by the fact that posets
 of the form  $\mathbb L(\mathfrak D)$ (our proposed
 iterands)
are $\theta$-Luzin preserving when $\mathfrak D$ is
sufficiently small we adopt the name $\vec\lambda$-thin
for this next notion.

\begin{definition}
  For a $\kappa$-tall sequence $\vec\lambda$, 
  we will say that a $(\kappa,\gamma)$-matrix-iteration $\meso$ is 
  $\vec\lambda$-thin
  providing that for each  $\xi<\kappa$ and $\alpha\leq \gamma$,
$\mathbb P^{\meso}_{\xi,\alpha} $  is
$\lambda_\xi$-Luzin preserving.
\end{definition}

Now we combine the notion of $\vec\lambda$-thin matrix-iteration
with Lemma \ref{luzin}.  We adopt Kunen's  notation that for a set
 $I$, $\operatorname{Fn}(I,2)$ denotes the usual poset for adding
 Cohen reals (finite partial functions from $I$ into $2$ ordered by
 superset).

\begin{lemma}
 Suppose that  $\meso$ is a $\vec\lambda$-thin $(\kappa,\gamma)$-matrix
 iteration\label{CohenLuzin}
   for some $\kappa$-tall sequence $\vec\lambda$. Further
 suppose that $\dot{\mathbb Q}_{i,0}$ is the $\mathbb P_{i,0}$-name
 of the poset $\operatorname{Fn}(\lambda_\xi,2)$ for each $\xi\in\kappa$,
 and therefore
  $\mathbb P_{\kappa,1}$ is isomorphic to $\operatorname{Fn}(\lambda,2)$.
Let $\dot g $ denote
 the generic function from $\lambda$ onto $2$ added by
    $ {\mathbb P}_{\kappa,1}$ and, for $i<\lambda$,
 let $\dot x_{i}$ be the canonical name of
 the set $ \{ n\in \omega : \dot g(i+n) = 1\}$.  
 Then  the family
   $\{ \dot x_{ i} :    i< \lambda  \}$ is forced by $\meso$
   to be a splitting family.
\end{lemma}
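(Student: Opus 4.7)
The plan is to take any $\mathbb P_{\kappa,\gamma}$-name $\dot S$ for an infinite subset of $\omega$ and produce some $i<\lambda$ and a condition forcing $\dot x_i$ to split $\dot S$.

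First I would reflect $\dot S$ to a bounded level. Each $\mathbb P_{\xi,\gamma}$ is ccc (iterands are forced ccc and finite-support iterations of ccc posets are ccc), and $\mathbb P_{\kappa,\gamma}=\bigcup_{\xi<\kappa}\mathbb P_{\xi,\gamma}$ by clause (4) of the matrix-iteration definition. Since $\kappa$ is regular uncountable and any nice name for a subset of $\omega$ uses only countably many countable antichains, $\dot S$ may be taken to be a $\mathbb P_{\xi,\gamma}$-name for some $\xi<\kappa$. I then pick $\eta$ with $\xi<\eta<\kappa$; as $\lambda_\eta$ is uncountable regular, $i+\omega<\lambda_\eta$ for each $i<\lambda_\eta$, so every $\dot x_i$ with $i<\lambda_\eta$ is a $\mathbb P_{\eta,1}$-name.

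Next I would show that $\mathcal F=\{\dot x_i:i<\lambda_\eta\}$ is forced by $\mathbb P_{\eta,\gamma}$ to be a $\lambda_\eta$-Luzin family. In $V[\mathbb P_{\eta,1}]$ this follows by the classical Cohen density argument: given a name $\dot J$ for a $\lambda_\eta$-sized subset of $\lambda_\eta$ and a fixed $n\in\omega$, any finite Cohen condition $q$ can be extended to some $r$ with $r\Vdash i\in\dot J$ and $r(i+n)=0$, for a suitable $i<\lambda_\eta$ with $i+n\notin\dom q$; this forces $\bigcap_{i\in\dot J}\dot x_i=\emptyset$, and symmetrically $\bigcup_{i\in\dot J}\dot x_i=\omega$. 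By the $\vec\lambda$-thin hypothesis, the tail iteration $\mathbb P_{\eta,\gamma}/\mathbb P_{\eta,1}$ is, in $V[\mathbb P_{\eta,1}]$, a finite-support ccc iteration of $\lambda_\eta$-Luzin preserving posets, so Lemma \ref{luzin} applied in that model gives that $\mathcal F$ stays $\lambda_\eta$-Luzin in $V[\mathbb P_{\eta,\gamma}]$.

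Finally, in $V[\mathbb P_{\eta,\gamma}]$ the set $S$ is still infinite; if no $\dot x_i$ with $i<\lambda_\eta$ splits $S$ then $\lambda_\eta=J_1\cup J_2$ with $J_1=\{i:S\subseteq^* x_i\}$ and $J_2=\{i:S\subseteq^*\omega\setminus x_i\}$, and by regularity of $\lambda_\eta$ one piece, say $J_1$, has size $\lambda_\eta$. Pigeonholing on the mod-finite witnesses $i\mapsto\min\{n:S\setminus n\subseteq x_i\}$ yields a $\lambda_\eta$-sized $J\subseteq J_1$ and an $n_0\in\omega$ with $S\setminus n_0\subseteq x_i$ for every $i\in J$, whence $S\setminus n_0\subseteq\bigcap_{i\in J}x_i$ must be finite by the Luzin property, contradicting $S$ infinite; the $J_2$ case is dual, using $\bigcup_{i\in J}x_i$ cofinite. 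Since $\mathbb P_{\eta,\gamma}\cprec\mathbb P_{\kappa,\gamma}$, this splitting lifts to $V[\mathbb P_{\kappa,\gamma}]$. The hard part will be the preservation step: the definition of $\vec\lambda$-thin is phrased over $V$, while $\mathcal F$ first appears in $V[\mathbb P_{\eta,1}]$, so one has to verify that each iterand beyond level $1$ is $\lambda_\eta$-Luzin preserving in its own intermediate model in order to legitimately invoke Lemma \ref{luzin} there.
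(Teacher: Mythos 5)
Your proof is correct and follows essentially the same route as the paper: reflect the name $\dot S$ down to some column $\mathbb P_{\xi,\gamma}$ using that nice names are countable and $\kappa$ is regular, observe that $\{\dot x_i : i<\lambda_\xi\}$ is $\lambda_\xi$-Luzin after the Cohen step, invoke $\vec\lambda$-thinness together with Lemma~\ref{luzin} to keep it Luzin up to $\mathbb P_{\xi,\gamma}$, and finish via ``Luzin implies splitting.'' Your intermediate $\eta>\xi$ is harmless but unnecessary --- $\lambda_\xi$ itself is already regular uncountable, so $i+\omega<\lambda_\xi$ for $i<\lambda_\xi$ and the paper works directly with $\xi$; and the pigeonhole argument you include is exactly the verification the paper leaves to the reader after the definition of $\theta$-Luzin. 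The cautionary note you raise at the end is a fair reading of the text: $\vec\lambda$-thinness as literally defined asserts $\lambda_\xi$-Luzin preservation of the \emph{initial segments} $\mathbb P_{\xi,\alpha}$ over $V$, while Lemma~\ref{luzin} (applied to the tail beyond the Cohen stage) needs the \emph{iterands} to be Luzin preserving in their own intermediate models; the paper elides this, relying implicitly on the fact that in the intended class $\mathcal K(\vec\lambda)$ the iterands are $\mathbb L(\dot{\mathfrak D})$ posets with filters of character below $\lambda_\xi$, hence Luzin preserving at each stage by Lemma~\ref{laverpreserves}, so the stronger iterand-wise reading is available.
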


\begin{proof}
 Let $G_{\kappa,\gamma}$ be a $\mathbb P_{\kappa,\gamma}$-generic
 filter.
 For each $\xi\in\kappa$ and $\alpha\leq \gamma$, let
 $G_{\xi,\alpha} = G_{\kappa,\gamma}\cap
  \mathbb P_{\xi,\alpha}$.
 Let $\dot y$ be any nice $\mathbb P_{\kappa,\gamma}$-name
 for a subset of $\omega$.  Since $\dot y$ is a countable name, 
  we may choose a $\xi<\kappa$ so that $\dot y$ is a
  $\mathbb P_{\xi,\gamma}$-name.
 It is easily shown, and very well-known, that
 the family $\{ \dot x_{i} : i <\lambda_\xi\}$ is forced
 by $\mathbb P_{\xi,1}$ (i.e. $\operatorname{Fn}(\lambda_\xi,2)$)
 to be a $\lambda_\xi$-Luzin family.  
By the hypothesis that $\meso$ is $\vec\lambda$-thin,
  we have, by Lemma \ref{luzin},
that $\{ \dot x_{i} : i < \lambda_\xi\}$ is still
   $\lambda_\xi$-Luzin in $V[G\cap \mathbb P_{\xi,\gamma}]$.
Since $\dot y$ is a $\mathbb P_{\xi,\gamma}$-name,
there is an $i<\lambda_\xi$ such that $ 
\dot y[G_{\xi,\gamma}]\cap \dot x_i[G_{\xi,\gamma}]$ and
 $\dot y[G_{\xi,\gamma}]
\setminus \dot x_i[G_{\xi,\gamma}]$ are infinite.
\end{proof}

\section{The construction of $\meso$}

When constructing a matrix-iteration by recursion, we will
need  notation and language for extension.
We will use, for an ordinal $\gamma$, $\meso^{\gamma}$ to indicate
that $\meso^\gamma$ is a
$(\kappa,\gamma)$-matrix iteration.

\begin{definition}  \begin{enumerate}
\item  A\label{extension}
matrix iteration $\meso^\gamma$ is an extension
  of $\meso^{\delta}$ providing $\delta\leq \gamma$, and, for each
  $\alpha\leq \delta$ and $i\leq \kappa$,
 ${\mathbb P}^{\meso^\delta}_{i,\alpha} = 
{\mathbb P}^{\meso^\gamma}_{i,\alpha} $. We can use
 $\meso^\gamma\restriction \delta$ to denote the unique
$(\kappa  ,\delta)$-matrix iteration extended by
 $\meso^\gamma$.

\item If, for each $i <\kappa$, $\dot {\mathbb Q}_{i,\gamma }$ is a 
  ${\mathbb P}^{\meso}_{i,\gamma }$-name of a ccc poset
  satisfying that, for each $i<j < \kappa $, $
  \mathbb P_{i,\gamma}*\dot{\mathbb Q}_{i, \gamma}$ is a complete
  subposet of $\mathbb P_{j,\gamma}*\dot{ \mathbb Q}_{j,\gamma }$,
  then we let $\meso{} * \langle \dot {\mathbb Q}_{i,\gamma}
   : i < \kappa
  \rangle$ denote the $(\kappa,\gamma+1)$-matrix  
  $\langle \langle \mathbb P_{i,\alpha} : i\leq\kappa,
\alpha \leq \gamma+1\rangle,
   \langle \dot {\mathbb Q}_{i,\alpha} : i\leq \kappa,
\alpha <\gamma+1\rangle 
\rangle$,  where  $\dot {\mathbb Q}_{\kappa,\gamma}$ is the
     $\meso$-name of the union of $\{ \dot {\mathbb Q}_{i,\gamma} :
i<\kappa\}$  and, for $i\leq \kappa $,
  $\mathbb P_{i,\gamma } = 
    {\mathbb P}^{\meso}_{i,\gamma}$, 
     $\mathbb P_{i,\gamma+1 } = 
    {\mathbb P}^{\meso}_{i,\gamma } * \dot {\mathbb Q}_{i,\gamma }$,
    and for $\alpha <\gamma$, 
     $(\mathbb P_{i,\alpha}, \dot {\mathbb Q}_{i,\alpha}) =
       ({\mathbb P}^{\meso}_{i,\alpha}, \dot {\mathbb
      Q}^{\meso}_{i,\alpha})$.
  \end{enumerate}
\end{definition}

The following, from \cite{BrendleFischer}*{Lemma 3.10},
shows that extension at limit steps
 is canonical.

\begin{lemma} If $\gamma $ is a limit\label{limitMatrix}
and if $\{\meso^\delta : \delta <\gamma\}$ is a sequence
of matrix iterations satisfying that for $\beta < \delta <
\gamma$,  $\meso^\delta\restriction \beta = \meso^\beta$, 
then there is a unique matrix iteration $\meso^\gamma$
such that $\meso^\gamma\restriction \delta = \meso^\delta$
for all $\delta < \gamma$.
\end{lemma}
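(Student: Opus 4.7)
The plan is to define $\meso^\gamma$ as the only possible common extension of the given sequence and then verify the four clauses of the matrix iteration definition, with the only substantive point being the preservation of the complete suborder relation at the limit column.

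First I would fix the data of $\meso^\gamma$. For each $\alpha<\gamma$ and $i\leq\kappa$, set $\mathbb P^{\meso^\gamma}_{i,\alpha}=\mathbb P^{\meso^\delta}_{i,\alpha}$ and $\dot{\mathbb Q}^{\meso^\gamma}_{i,\alpha}=\dot{\mathbb Q}^{\meso^\delta}_{i,\alpha}$ for any (equivalently, every) $\delta<\gamma$ with $\delta>\alpha$; the hypothesis $\meso^\delta\restriction\beta=\meso^\beta$ makes this well-defined and automatically yields $\meso^\gamma\restriction\delta=\meso^\delta$ for all $\delta<\gamma$. For the limit column $\alpha=\gamma$, set $\mathbb P^{\meso^\gamma}_{i,\gamma}=\bigcup_{\alpha<\gamma}\mathbb P^{\meso^\gamma}_{i,\alpha}$. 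Uniqueness is immediate, since clause (3) of the matrix iteration definition forces the $\gamma$-column to be a union of the earlier columns.

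Second, I would verify the axioms. For pairs $(i,\alpha)$ with $\alpha<\gamma$, all four clauses are inherited from any $\meso^\delta$ with $\delta>\alpha$. At the top level $\alpha=\gamma$, clause (2) is vacuous, clause (3) holds by the very definition, and clause (4) follows by interchanging the two unions,
\[
\mathbb P^{\meso^\gamma}_{\kappa,\gamma}
=\bigcup_{\alpha<\gamma}\mathbb P^{\meso^\gamma}_{\kappa,\alpha}
=\bigcup_{\alpha<\gamma}\bigcup_{j<\kappa}\mathbb P^{\meso^\gamma}_{j,\alpha}
=\bigcup_{j<\kappa}\mathbb P^{\meso^\gamma}_{j,\gamma}.
\]

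The main obstacle is clause (1) at $\alpha=\gamma$: that $\mathbb P^{\meso^\gamma}_{j,\gamma}\cprec \mathbb P^{\meso^\gamma}_{i,\gamma}$ whenever $j<i\leq\kappa$. I would invoke the reduction criterion recalled in the excerpt, namely that it suffices to check inclusion of $\leq$ and of $\perp$ together with existence of reductions. Inclusion of $\leq$ and $\perp$ reduce to finite support: any pair of conditions in $\mathbb P^{\meso^\gamma}_{j,\gamma}$ already lies in some common $\mathbb P^{\meso^\gamma}_{j,\alpha}$, where their compatibility status is inherited from the (already verified) axioms at level $\alpha$, then propagated to $\mathbb P^{\meso^\gamma}_{i,\alpha}$ by clause~(1) at that level, and finally up to the union. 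For reductions, given $q\in\mathbb P^{\meso^\gamma}_{i,\gamma}$, pick $\alpha<\gamma$ with $q\in\mathbb P^{\meso^\gamma}_{i,\alpha}$ and produce a reduction $p\in\mathbb P^{\meso^\gamma}_{j,\alpha}$ from clause~(1) at level $\alpha$. To verify that $p$ remains a reduction in the limit, take any $r\leq p$ in $\mathbb P^{\meso^\gamma}_{j,\gamma}$, locate it in some $\mathbb P^{\meso^\gamma}_{j,\beta}$ with $\beta\geq\alpha$, reduce $r$ back down to $\mathbb P^{\meso^\gamma}_{j,\alpha}$ using that $\mathbb P^{\meso^\gamma}_{j,\alpha}\cprec \mathbb P^{\meso^\gamma}_{j,\beta}$, and then apply the original reduction property of $p$ at level $\alpha$ to produce a common extension with $q$ first in $\mathbb P^{\meso^\gamma}_{i,\alpha}$ and thence in $\mathbb P^{\meso^\gamma}_{i,\gamma}$. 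This is the standard chain-union argument for complete suborders in ccc finite-support iterations and matches \cite{BrendleFischer}*{Lemma~3.10} cited above; no new idea is required beyond careful bookkeeping of the finite-support levels at which each relevant condition lives.
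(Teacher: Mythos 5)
Your overall plan mirrors the paper's: both you and the authors take the canonical definitions for all cells of $\meso^\gamma$, note uniqueness is forced by clause (3), inherit clauses (2)--(4) and everything at columns $\alpha<\gamma$ from the given $\meso^\delta$'s, and isolate the only genuine content as clause (1) at the limit column $\gamma$, which is checked via the reduction criterion. The paper then argues by \emph{induction on $\beta\in[\alpha,\gamma]$} that a reduction $p\in\mathbb P_{j,\alpha}$ of $q$ remains a reduction in $\mathbb P_{j,\beta}$, with the successor step appealing to the matrix condition~(1) at that single step; you instead attempt a one-shot jump from $\beta$ back to $\alpha$.

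The one-shot step as written has a gap. You take $r\leq p$ with $r\in\mathbb P_{j,\beta}$, pass to a reduction $r'\in\mathbb P_{j,\alpha}$ of $r$, and then use the reduction property of $p$ to find $s\in\mathbb P_{i,\alpha}$ with $s\leq r'$ and $s\leq q$. But the definition of reduction only promises that extensions of $r'$ \emph{inside $\mathbb P_{j,\alpha}$} are compatible with $r$; it says nothing about the condition $s$, which lives in the larger poset $\mathbb P_{i,\alpha}$. So $s\leq r'$ does not by itself yield $s\not\perp r$ in $\mathbb P_{i,\beta}$, which is what you need. What is actually required is the \emph{correctness} of the square of complete embeddings $(\mathbb P_{j,\alpha},\mathbb P_{i,\alpha},\mathbb P_{j,\beta},\mathbb P_{i,\beta})$: if you take $r'$ to be the finite-support restriction $r\restriction\alpha$ (not an arbitrary reduction), then $r\restriction\alpha\leq p$ automatically, and one can attempt to amalgamate $s$ with $r\restriction[\alpha,\beta)$ into a condition of $\mathbb P_{i,\beta}$; but even this amalgamation needs an argument, namely that each $r(\xi)$ (a $\mathbb P_{j,\xi}$-name for a member of $\dot{\mathbb Q}_{j,\xi}$) is still forced into $\dot{\mathbb Q}_{i,\xi}$ below the relevant part of $s$, which is exactly what the successor-step hypothesis of Lemma~\ref{successorMatrix} supplies. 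The paper's induction along $\beta$ is designed precisely to reduce the whole amalgamation to a single coordinate at a time, where condition~(1) of the matrix definition (together with Lemma~\ref{successorMatrix}) applies directly. To repair your argument, either switch to that induction, or explicitly replace ``reduce $r$ back down'' with ``take $r\restriction\alpha$'' and prove the amalgamation $s^\frown r\restriction[\alpha,\beta)\in\mathbb P_{i,\beta}$ by an induction on $\xi\in[\alpha,\beta)$, which in effect re-derives the paper's argument.
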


\begin{proof}
For each $\delta <\gamma$ and $i <\kappa $, we
define ${\mathbb P}^{\meso^\gamma}_{i,\delta } $
 to be $ {\mathbb P}^{\meso^\delta}_{i,\delta }$
and $\dot{\mathbb Q}^{\meso^\gamma}_{i, \delta } $ to be $\dot {\mathbb
  Q}^{\meso^{\delta+1}}_{i,\delta }$. It follows that 
$\dot{\mathbb Q}^{\meso^\gamma}_{i,\delta } $ is a
${\mathbb P}^{\meso^\gamma}_{i,\delta } $-name.
 Since $\gamma$ is a limit,
 the definition of ${\mathbb P}^{\meso^\gamma}_{i,\gamma }$ 
is required to be $\bigcup \{ {\mathbb P}^{\meso^\gamma}_{i,\delta } : 
 \delta <\gamma\}$ for $i<\kappa$. Similarly, 
 the definition of ${\mathbb P}^{\meso^\gamma}_{\kappa,\gamma}$ 
is required to be $\bigcup \{ {\mathbb P}^{\meso^\gamma}_{i,\gamma} : 
 i<\kappa \}$. Let us note that
${\mathbb P}^{\meso^\gamma}_{\kappa  ,\gamma}$  is also required to
be the union of the chain 
$\bigcup \{ {\mathbb P}^{\meso^\gamma}_{\kappa,\delta} : 
 \delta <\gamma\}$, and this holds by assumption
on the sequence $\{\meso^\delta : \delta < \gamma\}$.

 To prove that $\meso^{\gamma}$ is a 
 $(\kappa,\gamma)$-matrix it remains to prove that
for $j<i\leq \kappa$, and 
 each $q\in {\mathbb P}^{\meso^\gamma}_{i,\gamma}$,
 there is a reduction $p$ in ${\mathbb P}^{\meso^\gamma}_{j,\gamma}$.  
Since $\gamma$ is a limit, there is an $\alpha < \gamma$
such that
 $q\in {\mathbb P}^{\meso^\alpha}_{i,\alpha}$ and, by assumption,
 there is a reduction, $p$, of $q$ in 
${\mathbb P}^{\meso^\alpha}_{j,\alpha}$. 
By induction on $\beta$ ($\alpha \leq \beta \leq \gamma$)
we note that $q\in {\mathbb P}^{\meso^\beta}_{i,\beta}$
and that  $p$ is a reduction of $q$ 
in ${\mathbb P}^{\meso^\beta}_{j,\beta}$. For limit $\beta$ it is
trivial, and for successor $\beta$ it follows from condition
  (1) in the definition of  matrix iteration.
\end{proof}

We also will need the next result taken from 
 \cite{BrendleFischer}*{Lemma 13}, 
which 
they describe as well known, for stepping diagonally in the array of
posets. 

\begin{lemma} 
Let $\mathbb P, \mathbb Q$ be partial orders such that
 $\mathbb P$ is a complete suborder of $\mathbb Q$. 
Let $\dot {\mathbb A}$ be a $\mathbb P$-name for a
 forcing notion\label{successorMatrix}
  and let $\dot {\mathbb B}$ be a $\mathbb Q$-name for a forcing
notion such that $\Vdash_{\mathbb Q} \dot {\mathbb A}
\subset 
\dot {\mathbb B}$, and every $\mathbb P$-name of a maximal antichain
of $\dot {\mathbb A}$ is also forced by $\mathbb Q$ to be a maximal
antichain of $\dot {\mathbb B}$. 
 Then 
$\mathbb P * \dot {\mathbb A}
 <\!\!{\circ}~~ \mathbb Q * \dot  {\mathbb B}$  
\end{lemma}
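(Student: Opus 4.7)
I would verify the three clauses of the definition of complete suborder recalled at the top of Section~3: set inclusion, order containment, and preservation of maximal antichains. The antichain hypothesis on $(\dot{\mathbb A},\dot{\mathbb B})$ is tailor-made for the last clause, so the only real work is in translating between a maximal antichain of the two-step iteration $\mathbb P*\dot{\mathbb A}$ and the $\mathbb P$-name for a maximal antichain of $\dot{\mathbb A}$ that it induces.

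Inclusion and order are immediate: since $\mathbb P\subset\mathbb Q$ and $\Vdash_{\mathbb Q}\dot{\mathbb A}\subset\dot{\mathbb B}$ (as subposets, which is implicit in the antichain clause), every $\mathbb P$-name for an element of $\dot{\mathbb A}$ doubles as a $\mathbb Q$-name for an element of $\dot{\mathbb B}$, giving $\mathbb P*\dot{\mathbb A}\subset\mathbb Q*\dot{\mathbb B}$. If $(p_1,\dot a_1)\leq(p_2,\dot a_2)$ in $\mathbb P*\dot{\mathbb A}$, then $p_1\leq p_2$ carries over to $\mathbb Q$ by $\mathbb P<\!\!{\circ}\ \mathbb Q$, while $p_1\Vdash_{\mathbb P}\dot a_1\leq\dot a_2$ in $\dot{\mathbb A}$ lifts to $p_1\Vdash_{\mathbb Q}\dot a_1\leq\dot a_2$ in $\dot{\mathbb B}$ via the standard correspondence between $\mathbb Q$-generics $H$ through $p_1$ and their restrictions $H\cap\mathbb P$.

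For the antichain step, let $\mathcal A\subset\mathbb P*\dot{\mathbb A}$ be a maximal antichain and form the $\mathbb P$-name
\[
\dot E=\{(\dot a,p):(p,\dot a)\in\mathcal A\}.
\]
A standard two-step iteration argument shows that $\dot E$ is a $\mathbb P$-name for a maximal antichain of $\dot{\mathbb A}$: in any $\mathbb P$-generic $G$, the interpretation $\dot E[G]=\{\dot a[G]:\exists p\in G,\ (p,\dot a)\in\mathcal A\}$ is maximal in $\dot{\mathbb A}[G]$ precisely because $\mathcal A$ was maximal in $\mathbb P*\dot{\mathbb A}$. The hypothesis then yields $\Vdash_{\mathbb Q}$``$\dot E$ is a maximal antichain of $\dot{\mathbb B}$''. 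The antichain half implies that $\mathcal A$ remains an antichain in $\mathbb Q*\dot{\mathbb B}$ (splitting into two cases: if $p_1,p_2$ are incompatible in $\mathbb P$ then they are incompatible in $\mathbb Q$ by $\mathbb P<\!\!{\circ}\ \mathbb Q$; otherwise any common extension forces $\dot a_1,\dot a_2$ distinct members of $\dot E$, hence incompatible in $\dot{\mathbb B}$). The maximality half supplies, for any given $(q,\dot b)\in\mathbb Q*\dot{\mathbb B}$ and any $\mathbb Q$-generic $H$ with $q\in H$, some $(p,\dot a)\in\mathcal A$ with $p\in H\cap\mathbb P$ and with $\dot a[H\cap\mathbb P]$ compatible with $\dot b[H]$ in $\dot{\mathbb B}[H]$; pulling back via the forcing theorem gives $q'\leq q$ with $q'\leq p$ and a $\mathbb Q$-name $\dot c$ forced below both $\dot a$ and $\dot b$, so $(q',\dot c)$ witnesses the compatibility of $(q,\dot b)$ with $(p,\dot a)\in\mathcal A$ in $\mathbb Q*\dot{\mathbb B}$.

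\textbf{Main obstacle.} The only genuine subtlety is the bookkeeping between $\mathcal A$ as a set of two-step conditions and $\dot E$ as a $\mathbb P$-name of a subset of $\dot{\mathbb A}$, so that the hypothesis can be invoked verbatim. Once this correspondence is cleanly established, the argument is essentially a single application of the antichain hypothesis followed by standard two-step iteration manipulations.
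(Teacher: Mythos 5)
The paper does not actually prove this lemma; it cites \cite{BrendleFischer}*{Lemma 13} and notes that those authors regard it as well known, so there is no in-text argument to compare yours against. That said, your proof is correct and follows the natural route. The three clauses of the complete suborder relation are exactly what need to be checked, the passage from a maximal antichain $\mathcal A$ of $\mathbb P*\dot{\mathbb A}$ to the $\mathbb P$-name $\dot E=\{(\dot a,p):(p,\dot a)\in\mathcal A\}$ is the right translation device for invoking the hypothesis, and both halves (antichain preservation and predensity) come out as you describe. One point worth spelling out in a final write-up, since you leave it implicit in the parenthetical ``forces $\dot a_1,\dot a_2$ distinct members of $\dot E$'': if $q\leq p_1,p_2$ in $\mathbb Q$ and $H\ni q$ is $\mathbb Q$-generic, then $G=H\cap\mathbb P$ contains some $p\leq p_1,p_2$, and since $(p_1,\dot a_1)\perp(p_2,\dot a_2)$ in $\mathbb P*\dot{\mathbb A}$ one has $p\Vdash_{\mathbb P}\dot a_1\perp\dot a_2$ in $\dot{\mathbb A}$; in particular $\dot a_1[G]\neq\dot a_2[G]$, so they really are distinct members of $\dot E[G]$, and only then does the hypothesis (which says $\dot E$ is an antichain of $\dot{\mathbb B}$, not just of $\dot{\mathbb A}$) give their incompatibility in $\dot{\mathbb B}[H]$. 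This is exactly where the antichain clause on $(\dot{\mathbb A},\dot{\mathbb B})$ earns its keep, since $\dot{\mathbb A}\subset\dot{\mathbb B}$ as a subposet does not by itself preserve incompatibility. Your observation that this is the only genuine subtlety is accurate.
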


Let us also note if $\dot {\mathbb B} $ is equal to $\dot {\mathbb A}$
in Lemma \ref{successorMatrix},  then the hypothesis and the conclusion
of the Lemma are immediate.  On the other hand, 
if $\dot {\mathbb A}$ is the $\mathbb P$-name of
$\mathbb L(\dot {\mathfrak D})$ for some
 $\mathbb P$-name of a filter $\dot{\mathfrak D}$, then the
$\mathbb Q$-name of $\mathbb L(\dot {\mathfrak D})$ is not
necessarily equal to $\dot {\mathbb A}$.

\begin{lemma}[\cite{charspectrum}*{1.9}]
 Suppose that
 $\mathbb P,\mathbb Q$ are posets
with   $\mathbb P\cprec  \mathbb Q$.
Suppose also that  $\dot{\mathfrak D}_0$ is 
 a $\mathbb P$-name of a  filter on $\omega$ and
 $\dot{\mathfrak D}_1$ is a\label{laver} 
 $\mathbb Q$-name of a  filter on $\omega$.
  If $\Vdash_{\mathbb Q} \dot{\mathfrak D}_0
\subseteq \dot{\mathfrak D}_1 $
then $\mathbb P*\mathbb L(\dot{\mathfrak D}_0)$  is a complete
subposet of $    \mathbb   Q*\mathbb L(\dot{\mathfrak D}_1)$
if either of the two equivalent conditions hold:
\begin{enumerate}
\item $\Vdash_{\mathbb Q} ((\dot{\mathfrak D}_0)^+)_{\mathbb P}  \subseteq
  \dot{\mathfrak D}_1^+$, 
\item $\Vdash_{\mathbb Q}
\dot {\mathfrak D}_1\cap V^{\mathbb P} 
\subseteq \langle\dot{\mathfrak D}_0\rangle$ 
(where $V^{\mathbb P}$ is the class of
$\mathbb P$-names).
\end{enumerate}
\end{lemma}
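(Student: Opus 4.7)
The plan is to first establish $(1)\Leftrightarrow(2)$ by duality and then deduce $\mathbb P * \mathbb L(\dot{\mathfrak D}_0) \cprec \mathbb Q * \mathbb L(\dot{\mathfrak D}_1)$ from (1) via Lemma \ref{successorMatrix}, the heart being a rank-decrease recursion. For the equivalence, for any filter $\mathfrak D$ on $\omega$ one has $\dot X \in \dot{\mathfrak D}^+$ iff $\omega\setminus \dot X \notin \langle \dot{\mathfrak D}\rangle$; substituting the $\mathbb P$-name $\dot B = \omega\setminus\dot A$ and taking the contrapositive transforms (1) into (2).

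Assume (1). The hypothesis $\Vdash_{\mathbb Q}\dot{\mathfrak D}_0\subseteq \dot{\mathfrak D}_1$ immediately yields $\Vdash_{\mathbb Q}\mathbb L(\dot{\mathfrak D}_0)\subseteq \mathbb L(\dot{\mathfrak D}_1)$, so by Lemma \ref{successorMatrix} it suffices to show that every $\mathbb P$-name $\dot A$ of a maximal antichain of $\mathbb L(\dot{\mathfrak D}_0)$ is forced by $\mathbb Q$ to remain a maximal antichain of $\mathbb L(\dot{\mathfrak D}_1)$. I would work in $V[G_{\mathbb Q}]$ with $G_{\mathbb P} = G_{\mathbb Q}\cap \mathbb P$, $A = \dot A[G_{\mathbb P}]$, and $\mathfrak D_j = \dot{\mathfrak D}_j[G_j]$. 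The set $E_A = \{T'\in \mathbb L(\mathfrak D_0) : T'\leq S \text{ for some } S\in A\}$ is dense in $\mathbb L(\mathfrak D_0)$ and lies in $V[G_{\mathbb P}]$; let $\rho_A : \omega^{<\omega}\to \omega_1$ be its rank function from Definition \ref{rank}.

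For any $T\in \mathbb L(\mathfrak D_1)$ with stem $t$, I would build by finite recursion a sequence $t = t_0\subsetneq t_1\subsetneq\cdots\subsetneq t_n = t^*$ inside $T$ along which $\rho_A$ strictly decreases to $0$. At stage $m$ with $\alpha = \rho_A(t_m)>0$, the set $X_m = \{k : \rho_A(t_m^\frown k)<\alpha\}$ lies in $\mathfrak D_0^+$ in $V[G_{\mathbb P}]$ and is a $\mathbb P$-name, so by (1) it lies in $\mathfrak D_1^+$ in $V[G_{\mathbb Q}]$; hence it meets the branching set of $T$ at $t_m$ (which is in $\mathfrak D_1$), supplying some $k$ with $t_{m+1}:=t_m^\frown k\in T$ and $\rho_A(t_{m+1})<\alpha$. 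This rank-decrease step, in which (1) enters essentially, is the main obstacle. Ordinals strictly decrease, so the recursion terminates at some $t^*\in T$ with $\rho_A(t^*) = 0$, giving $t^* = \sakne(T^{**})$ for some $T^{**}\in E_A$ and hence $T^{**}\leq S$ for some $S\in A$, so $t^*\in S$. The tree $T'$ with stem $t^*$ whose branching set at each $u\supseteq t^*$ is the intersection of the branching sets of $T$ and $S$ at $u$ has branching in $\mathfrak D_1$ (intersection of a $\mathfrak D_1$-set and a $\mathfrak D_0$-set, with $\mathfrak D_0\subseteq \mathfrak D_1$) and is a common extension $T'\leq T$ and $T'\leq S$ in $\mathbb L(\mathfrak D_1)$, showing that $T$ is compatible with $S\in A$.
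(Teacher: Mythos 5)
Your proposal is correct and follows essentially the same route as the paper: reduce to Lemma \ref{successorMatrix}, pass to the rank function of (the dense set of extensions of) the antichain in $V[G\cap\mathbb P]$, and use condition (1) to show each $T\in\mathbb L(\mathfrak D_1)$ can be steered to a stem of rank $0$. The paper compresses your finite rank-decrease recursion into a one-line ``by induction on $\alpha<\delta$'' and omits the $(1)\Leftrightarrow(2)$ duality check, but the argument is the same.
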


\begin{proof}
Let $\dot E$ be any $\mathbb P$-name of a maximal antichain
of $\mathbb L(\dot{\mathfrak D}_0)$. By Lemma \ref{successorMatrix},
it suffices to show that $\mathbb Q$ forces that every member
of $\mathbb L(\dot{\mathfrak D}_1)$ is compatible with some member
of  $\dot E$. Let $G$ be any $\mathbb Q$-generic filter
and let $E$ denote the valuation of $\dot E$ by $G\cap \mathbb P$.
Working in the model $V[G\cap \mathbb P]$, we
  have the function $\rho_E $ as in Lemma \ref{rank}.
  Choose   $\delta\in \omega_1$ satisfying
 that $\rho_E(t) < \delta$ for all $t\in \omega^{<\omega}$.
 Now,
 working in $V[G]$, 
 we consider any $T\in \mathbb L(\dot{\mathfrak D}_1)$ and we find
 an element of $E$ that is compatible with $T$.  In fact,  
 by induction on $\alpha <\delta$, one easily proves
 that for each $T\in \mathbb L(\dot{\mathfrak D}_1)$ with
 $\rho_E(\sakne(T)) 
 \leq \alpha$, $T$ is compatible with some member of $E$.
\end{proof}

\begin{definition}
  For  a $(\kappa,\gamma)$-matrix-iteration $\meso$,
and ordinal $i_\gamma < \kappa$, 
  we say that\label{lambdathin}
  an increasing
 sequence $\langle \dot{ \mathfrak D}_i : i <   \kappa \rangle$
  is a 
  $(\meso,\vec\lambda(i_\gamma))$-thin sequence of filter bases,
  if for each $i<j<\kappa$ 
  \begin{enumerate}
\item $\dot{\mathfrak D}_{i}$
is a subset
of $ \mathbb B_{i,\gamma}$ 
(hence a nice ${\mathbb  P}_{i ,\gamma}^{\meso}$-name)
\item $\Vdash_{\mathbb P_{i,\gamma}}\dot{\mathfrak D}_{i}$ 
 is a filter with a base of cardinality at most  $\mu_{i_\gamma}$,
\item
  $\Vdash_{\mathbb P_{j,\gamma}}
\langle\dot {\mathfrak D}_j\rangle\cap \mathbb B_{i,\gamma}
 \subseteq
\langle     \dot{\mathfrak D}_{i } \rangle$.
  \end{enumerate}
\end{definition}

Notice that a $(\meso,\vec\lambda(i_\gamma))$-thin sequence of filter bases
can be (essentially) eventually constant. Thus we will say that a sequence
$\langle \dot {\mathfrak D}_i  :  i  \leq j \rangle$ (for some
$j<\kappa$) 
is 
a $(\meso,\vec\lambda(i_\gamma))$-thin sequence of filter bases if
the sequence
$\langle\dot  {\mathfrak D}_i  : i <
 \kappa \rangle$ is
a $(\meso,\vec\lambda(i_\gamma))$-thin sequence of filter bases
where $\dot {\mathfrak D}_i $
is the $\mathbb P_{i,\gamma}$-name 
for $\mathbb B_{i,\gamma} \cap \langle
\dot{\mathfrak D}_{j}\rangle$
for $j  < i\leq \kappa$.
When $\meso$ is clear from the context, we will use
 $\vec\lambda(i_\gamma)$-thin as an abbreviation for
 $(\meso,\vec\lambda(i_\gamma))$-thin.

\begin{corollary}
  For  a  $(\kappa,\gamma)$-matrix-iteration $\meso$,
ordinal $i_\gamma <\kappa$, 
and  a\label{extL} 
  $(\meso,\vec\lambda(i_\gamma))$-thin sequence of filter bases
$\langle \dot{\mathfrak D}_\xi : 
i <
\kappa\rangle$,  
$\meso*\langle \dot {\mathbb Q}_{i,\gamma} : i\leq \kappa\rangle$
is a $\gamma+1$-extension of $\meso$,
where, for each $i \leq i_\gamma$, $\dot{\mathbb Q}_{i,\gamma}$ is
the trivial poset, and for $i_\gamma\leq i <\kappa $,
$\dot{\mathbb Q}_{i,\gamma}$ is
$\mathbb L(\dot{\mathfrak D}_{i})$.
\end{corollary}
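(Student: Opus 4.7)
The plan is to verify the hypotheses of Definition \ref{extension}(2) for the proposed one-step extension. Concretely, I must check that each $\dot{\mathbb Q}_{i,\gamma}$ is a $\mathbb P_{i,\gamma}$-name of a ccc poset and that for every $i<j<\kappa$
\[
\mathbb P_{i,\gamma} * \dot{\mathbb Q}_{i,\gamma} \;\cprec\; \mathbb P_{j,\gamma} * \dot{\mathbb Q}_{j,\gamma}.
\]
The ccc requirement is immediate: the trivial poset is ccc, and $\mathbb L(\dot{\mathfrak D}_i)$ is $\sigma$-centered (any two trees in $\mathbb L(\dot{\mathfrak D}_i)$ with a common stem are compatible), hence ccc.

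For the complete suborder relation I split into cases according to the position of $i,j$ relative to $i_\gamma$. When $j\leq i_\gamma$ both iterands are trivial and the conclusion reduces to $\mathbb P_{i,\gamma} \cprec \mathbb P_{j,\gamma}$, which is clause (1) of the definition of matrix iteration applied to $\meso$. When $i\leq i_\gamma < j$, the left iterand is trivial, so what is required is $\mathbb P_{i,\gamma}\cprec \mathbb P_{j,\gamma}*\mathbb L(\dot{\mathfrak D}_j)$; this follows by transitivity of $\cprec$ from $\mathbb P_{i,\gamma}\cprec \mathbb P_{j,\gamma}$ (again from $\meso$) together with the standard fact that $\mathbb P_{j,\gamma}\cprec \mathbb P_{j,\gamma}*\mathbb L(\dot{\mathfrak D}_j)$ in any two-step iteration.

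The substantive case is $i_\gamma\leq i<j<\kappa$, where both iterands are Laver posets. I will invoke Lemma \ref{laver} with $\mathbb P := \mathbb P_{i,\gamma}$, $\mathbb Q := \mathbb P_{j,\gamma}$, $\dot{\mathfrak D}_0 := \dot{\mathfrak D}_i$, and $\dot{\mathfrak D}_1 := \dot{\mathfrak D}_j$. The hypothesis $\mathbb P\cprec \mathbb Q$ is part of $\meso$ being a matrix iteration, and $\Vdash_{\mathbb Q}\dot{\mathfrak D}_i \subseteq \langle \dot{\mathfrak D}_j\rangle$ is built into the ``increasing'' requirement of Definition \ref{lambdathin}. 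The crucial translation is between clause (3) of Definition \ref{lambdathin} and condition (2) of Lemma \ref{laver}: since $\mathbb B_{i,\gamma}$ is by Definition \ref{defineB} the canonical $\mathbb P_{i,\gamma}$-name for $\pomega$, the formula $\Vdash_{\mathbb P_{j,\gamma}} \langle \dot{\mathfrak D}_j \rangle \cap \mathbb B_{i,\gamma} \subseteq \langle \dot{\mathfrak D}_i \rangle$ says exactly that $\Vdash_{\mathbb P_{j,\gamma}} \langle \dot{\mathfrak D}_j\rangle \cap V^{\mathbb P_{i,\gamma}} \subseteq \langle \dot{\mathfrak D}_i\rangle$, which is condition (2) of Lemma \ref{laver}. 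That lemma then delivers $\mathbb P_{i,\gamma}*\mathbb L(\dot{\mathfrak D}_i)\cprec \mathbb P_{j,\gamma}*\mathbb L(\dot{\mathfrak D}_j)$, as required.

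Finally, the top row $i=\kappa$ is handled by taking unions: $\dot{\mathbb Q}_{\kappa,\gamma}$ is declared to be the $\meso$-name of the union of the $\dot{\mathbb Q}_{i,\gamma}$, and since $\mathbb P_{\kappa,\gamma} = \bigcup_{j<\kappa}\mathbb P_{j,\gamma}$, the identification $\mathbb P_{\kappa,\gamma+1} = \bigcup_{j<\kappa}\mathbb P_{j,\gamma+1}$ is forced, and each $\mathbb P_{j,\gamma+1}\cprec \mathbb P_{\kappa,\gamma+1}$ follows from the previous cases applied uniformly in $j$. The main obstacle I anticipate is simply matching clause (3) of Definition \ref{lambdathin} up against condition (2) of Lemma \ref{laver}; once that dictionary is verified, Lemma \ref{laver} carries out all of the substantive work.
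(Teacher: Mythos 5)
Your proof is correct, and it reconstructs exactly the argument the paper leaves implicit: the corollary follows by combining the definition of extension with Lemma~\ref{laver}, and the nontrivial step is indeed matching clause~(3) of Definition~\ref{lambdathin} (phrased via $\mathbb B_{i,\gamma}$, the set of nice $\mathbb P_{i,\gamma}$-names) against condition~(2) of Lemma~\ref{laver} (phrased via $V^{\mathbb P_{i,\gamma}}$). Your handling of the degenerate cases $j\le i_\gamma$ and $i\le i_\gamma<j$, and of the top row $i=\kappa$ by taking unions along a chain of uncountable cofinality, is what the paper expects the reader to supply.
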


\begin{definition}
Whenever $\langle \dot{\mathfrak D}_i : i<\kappa\rangle$
is a $(\meso,\vec\lambda(i_\gamma))$-thin sequence of filter bases,
  let $\meso*\mathbb L(\langle \dot{\mathfrak D_i} :i_\gamma\leq i<\kappa
  \rangle)$   
denote the $\gamma+1$-extension described in Corollary \ref{extL}.
\end{definition}

This next corollary is immediate.

\begin{corollary}
  If $\meso$ is  a $\vec\lambda$-thin $(\kappa,\gamma)$-matrix
  and if $\langle \dot{\mathfrak D_i} :  i < \kappa
\rangle$ is
  a  $(\meso,\vec\lambda(i_\gamma))$-thin sequence of filter bases,
  then $\meso*\mathbb L(
  \langle \dot{\mathfrak D_i}  :i_\gamma\leq i<\kappa \rangle)$
  is a $\vec\lambda$-thin $(\kappa,\gamma+1)$-matrix.
\end{corollary}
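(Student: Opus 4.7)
The plan is to verify the $\vec\lambda$-thinness condition column by column: for each $\xi<\kappa$ and each $\alpha\leq\gamma+1$, check that $\mathbb P_{\xi,\alpha}^{\meso*\mathbb L(\cdot)}$ is $\lambda_\xi$-Luzin preserving. For every $\alpha\leq\gamma$ the poset at position $(\xi,\alpha)$ is unchanged from $\meso$, so the hypothesis that $\meso$ is $\vec\lambda$-thin handles all of those cases automatically. Thus the only new work concerns the column $\alpha=\gamma+1$.

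For $\xi<i_\gamma$ the new iterand $\dot{\mathbb Q}_{\xi,\gamma}$ is trivial, so $\mathbb P_{\xi,\gamma+1}$ is just $\mathbb P_{\xi,\gamma}$ and there is nothing to prove. For $i_\gamma\leq\xi<\kappa$ the iterand is $\mathbb L(\dot{\mathfrak D}_\xi)$, and I would like to invoke Lemma \ref{laverpreserves}, applied inside the extension by $\mathbb P_{\xi,\gamma}$, to conclude that $\mathbb P_{\xi,\gamma}$ forces $\mathbb L(\dot{\mathfrak D}_\xi)$ to be $\lambda_\xi$-Luzin preserving. That lemma requires that $\dot{\mathfrak D}_\xi$ has a base of cardinality strictly less than the regular uncountable cardinal $\lambda_\xi$; condition (2) of the definition of a $(\meso,\vec\lambda(i_\gamma))$-thin sequence says the base has size at most $\mu_{i_\gamma}$, so it is enough to verify the inequality $\mu_{i_\gamma}<\lambda_\xi$ for every $\xi\geq i_\gamma$.

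The inequality is built into the $\kappa$-tall definition: for $\xi=i_\gamma$ it is explicit ($\mu_\eta<\lambda_\eta$), and for $\xi>i_\gamma$ we have $\mu_{i_\gamma}<\lambda_{i_\gamma}\leq \sup\{\lambda_\eta:\eta<\xi\}<\mu_\xi<\lambda_\xi$. With the iterand confirmed $\lambda_\xi$-Luzin preserving in $V^{\mathbb P_{\xi,\gamma}}$ and $\mathbb P_{\xi,\gamma}$ itself $\lambda_\xi$-Luzin preserving by hypothesis, any $\lambda_\xi$-Luzin family in $V$ remains $\lambda_\xi$-Luzin after forcing with $\mathbb P_{\xi,\gamma}$ and then after the further iterand, so the two-step composite $\mathbb P_{\xi,\gamma+1}=\mathbb P_{\xi,\gamma}*\mathbb L(\dot{\mathfrak D}_\xi)$ is $\lambda_\xi$-Luzin preserving. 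This is precisely the successor-step content of Lemma \ref{luzin} applied to the full iteration up through $\gamma+1$.

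There is no serious obstacle: Corollary \ref{extL} already records that $\meso*\mathbb L(\langle\dot{\mathfrak D}_i:i_\gamma\leq i<\kappa\rangle)$ is a legitimate $(\kappa,\gamma+1)$-matrix extension of $\meso$, so all that is left is the cardinal arithmetic above and the appeal to Lemmas \ref{luzin} and \ref{laverpreserves}. The one point worth being careful about is that condition (2) is phrased in terms of the fixed bound $\mu_{i_\gamma}$ (rather than varying with $\xi$), which is exactly what forces one to use the $\kappa$-tall structure in the form $\mu_{i_\gamma}<\lambda_\xi$ for all $\xi\geq i_\gamma$.
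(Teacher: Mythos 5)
Your proof is correct and spells out exactly what the paper leaves as immediate: the only new column is $\gamma+1$, where the iterand at row $\xi\geq i_\gamma$ is $\mathbb L(\dot{\mathfrak D}_\xi)$ with a base of size at most $\mu_{i_\gamma}<\lambda_\xi$ (and trivial for $\xi<i_\gamma$), so Lemma \ref{laverpreserves} applies and the two-step composite $\mathbb P_{\xi,\gamma}*\mathbb L(\dot{\mathfrak D}_\xi)$ is $\lambda_\xi$-Luzin preserving by Lemma \ref{luzin}. The cardinal inequality $\mu_{i_\gamma}<\lambda_\xi$ for every $\xi\geq i_\gamma$ is precisely what $\kappa$-tallness was designed to deliver, and you have verified it correctly.
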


We now describe a first approximation of
the scheme, $\mathcal K(\vec\lambda)$,
 of posets that we will be using to produce
the model.

\begin{definition}
  For an ordinal $\gamma>0$ and a $(\kappa,\gamma)$-matrix iteration
  $\meso$, we will say that 
  $\meso\in \mathcal K(\vec \lambda)$ providing for each
  $0<\alpha<\gamma$,  
  \begin{enumerate}
\item for each $i\leq\kappa$, ${\mathbb P}_{i,1}^{\meso}$ is
  $\operatorname{Fn}(\lambda_i,2)$, and 
\item there is an $i_\alpha = i^{\meso}_\alpha <\kappa$
and a   $({\meso}\restriction
 \alpha,\vec\lambda(i_\alpha) )$-thin 
sequence
$ \langle {\dot{\mathfrak D}}^\alpha_i : i  <\kappa \rangle $ of
filter bases, such that
 $\meso\restriction\alpha+1$
  is equal to    $\meso \restriction \alpha *
  {\mathbb L}(\langle\dot{ \mathfrak D}^\alpha_i : i_\alpha \leq i
  <\kappa\rangle)$.
  \end{enumerate}
For each $0<\alpha<\gamma$,
we let $\dot{\mathfrak D}^\alpha_\kappa$ denote
the $\meso\restriction \alpha$-name of the union
$\bigcup \{\dot{ \mathfrak D}^\alpha_i : i_\alpha\leq i
  <\kappa\}$, and we
 let $\dot L_\alpha$ denote the canonical
$\meso\restriction\alpha+1$-name of the subset of $\omega$ added
by $ \mathbb L(\mathfrak D^\alpha_\kappa)$.
\end{definition}

Let us note that each $\meso\in \mathcal K(\vec \lambda)$ is
$\vec\lambda$-thin. Furthermore, by Lemma \ref{CohenLuzin},
this means that each $\meso\in \mathcal
K(\vec\lambda)$ forces that $\mathfrak s\leq \lambda$.
We begin a new section for the task of proving that there
 is a $\meso\in \mathcal K(\vec\lambda)$ that forces that 
 $s\geq\lambda$.

It will be important to be able to construct 
$(\meso,\vec\lambda(i_\gamma))$-thin sequences of filter bases, and it
seems  we will need some help.

\begin{definition}
  For an ordinal $\gamma>0$ and\label{H}
 a $(\kappa,\gamma)$-matrix iteration
  $\meso$ we will say that $\meso\in \mathcal H(\vec\lambda)$
if $\meso$ is
in $\mathcal K(\vec \lambda)$ and for each
  $0<\alpha<\gamma$,  if $i_\alpha = i^{\meso}_\alpha>0$ then
 $\omega_1\leq \cf(\alpha) \leq \mu_{i_\alpha}$ and
there is a $\beta_\alpha<\alpha$ such that
  \begin{enumerate}
\item for $\beta_\alpha\leq \xi <\alpha$, $i_\xi \in \{0,i_\alpha\}$,
\item if $\beta_\alpha \leq \eta<\alpha$, $i_\eta>0$
and $\xi = \eta+\omega_1\leq\alpha$, then 
 $\dot L_\eta\in \dot{\mathcal D}^{\xi}_{i_\xi}$, and 
 ${\mathbb P}_{i_\xi,\xi}\Vdash \dot {\mathcal D}^\alpha_{i_\xi}$
has a descending mod finite base of cardinality $\omega_1$,
\item if $\beta_\alpha < \xi \leq \alpha$, $i_\xi>0$, 
and $\eta+\omega_1 <\xi$ for $\eta<\xi$, then
 $\{ \dot L_{\eta} : \beta_\alpha \leq \eta <\alpha, \cf(\eta)
 \geq\omega_1\}$ is a base for $\dot{\mathcal D}^\xi_{i_\xi}$.
  \end{enumerate}
\end{definition}

\section{Producing $\vec\lambda$-thin filter sequences}

In this section we prove this main lemma.

\begin{lemma}
 Suppose that   $\meso^\gamma\in \mathcal H(\vec\lambda)$ and 
that\label{notsplitting} 
  $\mathcal Y$ is  a set
of fewer than $\lambda$ nice 
$\meso^\gamma$-names of subsets of $\omega$,
then there is a $\delta<\gamma+\lambda$
and an
extension $\meso^{\delta}$ of $\meso^\gamma$
 in $\mathcal H(\vec\lambda)$
that forces that the family $\mathcal Y$ is not  a splitting family.
\end{lemma}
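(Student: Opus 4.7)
The plan is to extend $\meso^\gamma$ by a matrix iteration of length $\delta-\gamma<\lambda$ that adds, at a single row $i^*<\kappa$, a long tower of Laver-style iterands $\mathbb L(\dot{\mathfrak D})$ whose final Laver-generic real is forced to be almost contained in one of $\{\dot y,\omega\setminus\dot y\}$ for every $\dot y\in\mathcal Y$. Such a real is unsplit by $\mathcal Y$ and witnesses the conclusion of the lemma.

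\textbf{Construction.} Since $|\mathcal Y|<\lambda=\sup_{\xi<\kappa}\lambda_\xi$ and $\cf(\lambda)=\kappa$, fix $i^*<\kappa$ with $|\mathcal Y|\cdot\omega_1\leq\mu_{i^*}$. Because $\mathbb P^{\meso^\gamma}_{\kappa,\gamma}$ is the union of the $\kappa$-chain $\{\mathbb P^{\meso^\gamma}_{i,\gamma}:i<\kappa\}$ and nice names are countable, we may enlarge $i^*$ so that every $\dot y\in\mathcal Y$ is actually a nice $\mathbb P^{\meso^\gamma}_{i^*,\gamma}$-name. Enumerate $\mathcal Y=\langle\dot y_\sigma:\sigma<|\mathcal Y|\rangle$ and set $\delta=\gamma+|\mathcal Y|\cdot\omega_1$. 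Build $\meso^\delta$ by recursion, partitioning $[\gamma,\delta)$ into successive $\omega_1$-blocks $B_\sigma=[\gamma+\sigma\cdot\omega_1,\gamma+(\sigma+1)\cdot\omega_1)$. At the top of each block $B_\sigma$, apply Corollary~\ref{extL} with $i_\alpha=i^*$ and filter base $\dot{\mathfrak D}^\alpha_{i^*}$ generated by $\{\dot L_\eta:\gamma\leq\eta<\alpha,\ \cf(\eta)\geq\omega_1\}$ together with a canonical name $\dot z_\sigma$ for whichever of $\dot y_\sigma,\omega\setminus\dot y_\sigma$ preserves positivity of the generated filter; since, for any filter $\mathfrak D$ on $\omega$ and any $y\subseteq\omega$, one of $y,\omega\setminus y$ lies in $\mathfrak D^+$, such a name may be defined along an appropriate maximal antichain of decisions. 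The remaining stages within $B_\sigma$ follow the $\mathcal H(\vec\lambda)$ template so that clauses~(2) and~(3) of Definition~\ref{H} are maintained, in particular so that each $\dot L_\eta\in\dot{\mathfrak D}^{\eta+\omega_1}_{i^*}$. Lemmas~\ref{laverpreserves} and~\ref{luzin} then yield preservation of $\vec\lambda$-thinness at every stage, and Lemma~\ref{laver} together with Corollary~\ref{extL} gives the complete-suborder relations needed for the matrix structure. At $\delta$ the last added Laver real $\dot L$ is almost contained in every prior $\dot L_\eta$ (by transfinite induction through the blocks) and hence in every $\dot z_\sigma$, so $\dot L$ decides each $\dot y_\sigma$, showing $\mathcal Y$ is non-splitting.

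\textbf{Main obstacle.} The delicate point is the simultaneous maintenance of all three clauses of Definition~\ref{H} while inserting the ``decision'' names $\dot z_\sigma$. In particular one must verify $\cf(\alpha)\leq\mu_{i_\alpha}$ at every relevant limit (the reason for the choice $|\mathcal Y|\cdot\omega_1\leq\mu_{i^*}$), maintain the descending mod-finite $\omega_1$-base requirement at block boundaries $\gamma+\sigma\cdot\omega_1$ for limit $\sigma$, and confirm that the choice of $\beta_\alpha$ at the start of each block $B_\sigma$ is consistent with the global $\mathcal H(\vec\lambda)$ structure so that filters from preceding blocks are correctly inherited through clause~(3). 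A secondary subtlety is that the ``decision'' name $\dot z_\sigma$ must be constructed in the intermediate model $V^{\mathbb P^{\meso^\alpha}_{i^*,\alpha}}$ rather than chosen with global foreknowledge; this requires a mixing-of-names argument verifying that the canonical name obtained by deciding on each condition separately does indeed extend the previous filter to a nice $\mathbb P^{\meso^\alpha}_{i^*,\alpha}$-name of a filter base.
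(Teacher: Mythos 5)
Your high-level reduction is right: you should find $i^*<\kappa$ with $|\mathcal Y|\le\mu_{i^*}$, build up a filter of names that ``measures'' each $\dot y\in\mathcal Y$, and finally add a Laver real over that filter, which is then unsplit by $\mathcal Y$. That is also the paper's strategy. But the heart of the problem---the thing the paper spends all of Section 5 (Lemmas \ref{nodirection}, \ref{addsmall}, \ref{existsthin} and the six-Fact argument that follows) proving---is that the ``decision'' can be made in a way that keeps the \emph{whole sequence} $\langle\dot{\mathfrak D}_i : i<\kappa\rangle$ a $(\meso,\vec\lambda(i_\gamma))$-thin sequence of filter bases, and your argument does not touch this.

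Concretely: you justify the existence of $\dot z_\sigma$ by the one-row observation that for any filter $\mathfrak D$ on $\omega$ and any $y$, one of $y,\omega\setminus y$ lies in $\mathfrak D^+$. That ensures clause (2) of Definition \ref{lambdathin} at row $i^*$, but says nothing about clause (3): for all $i<j<\kappa$, $\Vdash_{\mathbb P_{j,\gamma}}\langle\dot{\mathfrak D}_j\rangle\cap\mathbb B_{i,\gamma}\subseteq\langle\dot{\mathfrak D}_i\rangle$. Adding $\dot z_\sigma$ to the filter at row $j=i^*$ can create new elements of $\langle\dot{\mathfrak D}_{i^*}\rangle\cap\mathbb B_{i,\gamma}$ for $i<i^*$ (names $\dot b$ that $\mathbb P_{i^*,\gamma}$ forces to contain something of the form $\dot z_\sigma\cap\dot E$), and you must simultaneously add witnesses to all the lower rows $\dot{\mathfrak D}_i$ without destroying the filter property or the mod-finite descending structure of Definition \ref{H}. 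Neither side of a given $\dot y$ is guaranteed to admit such a coherent extension a priori, and indeed the paper's proof of Lemma \ref{existsthin} must work hard---inducting on $i_{\dot y}$ and $\alpha_{\dot y}$, using an elementary submodel $M$ closed under $\lambda_j$-sequences, proving $\alpha_{\dot y}$ is a successor, that $i_{\dot y}=i_\alpha$, that the character of $\dot{\mathfrak D}^\alpha_{i_\alpha}$ exceeds $\mu_{i_\gamma}$, and constructing the auxiliary names $\dot E_t$ and $\dot x[\dot L_\beta]$---precisely to exhibit a measuring name whose addition leaves the sequence thin. Without some version of that argument your construction does not get off the ground: Corollary \ref{extL} cannot be applied because its hypothesis (thinness of the sequence) has not been verified.

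A secondary structural difference: the paper first builds a thin filter $\mathcal E$ in $V$ measuring all of $\mathcal Y$ (Lemma \ref{existsthin}) and only then does a single block of roughly $\mu_{i_\gamma}$ forcing steps (Lemma \ref{nodirection}) to turn $\mathcal E$ into a filter base for the final Laver iterand, whereas you handle $\mathcal Y$ one name at a time over $|\mathcal Y|\cdot\omega_1$ blocks. Your block-by-block scheme is not obviously wrong, but it compounds the problem: you must re-establish thinness at every block boundary, while each newly added Laver generic $\dot L_\eta$ feeds back into the filters at all rows. You flag this as a ``delicate point'' and a ``secondary subtlety,'' but it is in fact the entire mathematical content of the lemma, and the proposal as written provides no argument for it.
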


The main theorem follows easily.

\bgroup

 \def\proofname{Proof of Theorem \ref{mainth}.}

 \begin{proof}
   Let $\theta$ be any regular cardinal so
    that  $\theta^{<\lambda} = \theta $ (for example, $\theta =
    (2^\lambda)^+$).    Construct $\meso^\theta
\in\mathcal H(\vec\lambda)$ so that for all
   $\mathcal Y\subset 
   \mathbb B_{\kappa,\theta}$ with $|\mathcal Y|<\lambda$,
   there is a $\gamma<\delta<\theta$ so that
    $\mathcal Y\subset \mathbb B_{\kappa,\gamma}$ and,
by applying Lemma \ref{notsplitting}, 
such that $\meso^\theta\restriction\delta$ forces that
$\mathcal Y$ is not  a splitting family.
 \end{proof}

\egroup

We begin by reducing our job to simply 
finding  a $(\meso, \vec\lambda(i_\gamma))$-thin sequence.

\begin{definition}
  For  a $(\kappa,\gamma)$-matrix-iteration $\meso^\gamma$,
  we say that\label{setlambdathin} a subset $\mathcal E$
of $\mathbb B_{\kappa,\gamma}$ is
$(\meso^\gamma,\vec\lambda(i_\gamma))$-thin 
filter subbase
if, $i_\gamma<\kappa$, 
$|\mathcal E|\leq \mu_{i_\gamma}$, and
 the sequence $\langle \langle \mathcal E\cap \mathbb B_{i,\gamma}
 \rangle : i< \kappa\rangle $ is a
$(\meso^\gamma,\vec\lambda(i_\gamma))$-thin sequence of filter bases. 
\end{definition}

\begin{lemma}
  For  any  $\meso^\gamma
\in\mathcal H(\vec\lambda)$,
and any $(\meso^\gamma,\vec\lambda(i_\gamma))$-thin filter base
$\mathcal E$, there is an $\alpha \leq \gamma+\mu_{i_\gamma}+1$
and  extensions  $\meso^{\alpha},\meso^{\alpha+1}$ of $\meso^\gamma$
in $\mathcal H(\vec\lambda)$, such that,\label{nodirection} 
$\meso^{\alpha+1} = \meso^{\alpha} *
 \mathbb L(\langle \dot{\mathfrak D}^\alpha_i : i_\alpha\leq i <
 \kappa\rangle)$  and 
$\meso^{\alpha}$ forces that
$\mathcal E\cap \mathbb B_{i,\gamma}$ is a subset
of $ \dot{\mathfrak D}^\alpha_i$ for all $i <\kappa$.
\end{lemma}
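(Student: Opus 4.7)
The plan is to extend $\meso^\gamma$ by a matrix iteration of length $\mu_{i_\gamma}$ in which Laver steps sit at a cofinal subset of ordinals of cofinality $\geq \omega_1$, each absorbing one element of $\mathcal E$ into its filter. Put $\mu = \mu_{i_\gamma}$, $i_\alpha = i_\gamma$, and $\beta_\alpha = \gamma$, and aim for $\alpha = \gamma + \mu$; since $\mu$ is an uncountable regular cardinal (assuming $i_\gamma \geq 1$; the degenerate case $i_\gamma = 0$ uses $\alpha = \gamma + 1$), $\cf(\alpha) = \mu \in [\omega_1, \mu_{i_\alpha}]$. For $0 < \nu \leq \mu$ write $\xi_\nu = \gamma + \omega_1 \cdot \nu$ (so $\xi_\mu = \alpha$), and let $S$ be the set of successor ordinals in $(0, \mu)$; as $|S| = \mu \geq |\mathcal E|$, enumerate $\mathcal E = \{e_\nu : \nu \in S\}$ with repetitions. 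The intention is to declare $i_{\xi_\nu} = i_\alpha$ for $\nu \in S$ and $i_\xi = 0$ for every other $\xi \in (\gamma,\alpha)$.

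\textbf{Construction.} I would build $\meso^\xi$ for $\gamma < \xi \leq \alpha$ recursively, using Lemma \ref{limitMatrix} at limits and Corollary \ref{extL} at Laver-step successors. At $\xi = \xi_\nu$ with $\nu \in S$ take $\dot{\mathfrak D}^\xi_i$ (for $i \in [i_\alpha, \kappa)$) to be the $\mathbb P_{i, \xi}$-name of the filter generated by
\[
\{\dot L_{\xi_{\nu'}} : \nu' \in S,\ \nu' < \nu\} \cup \{e_\nu\}.
\]
When $\nu - 1 \in S$, clause (2) of Definition \ref{H} is met by the presence of $\dot L_{\xi_{\nu-1}}$ among the generators, and an $\omega_1$-indexed descending mod-finite base can be built in the extension below $\dot L_{\xi_{\nu-1}} \cap e_\nu$. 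At $\xi_\nu$ for $\nu$ a limit of uncountable cofinality one would similarly set $i_{\xi_\nu} = i_\alpha$, and clause (3) would pin the base to $\{\dot L_{\xi_{\nu'}} : \nu' \in S,\ \nu' < \nu\}$, which already contains each $e_{\nu'}$ for $\nu' < \nu$ from the earlier successor stages. Finally $\dot{\mathfrak D}^\alpha_i$ is declared to have base $\{\dot L_{\xi_\nu} : \nu \in S\} \cap \mathbb B_{i, \alpha}$ in accordance with clause (3) at $\alpha$, and $\meso^{\alpha+1} = \meso^\alpha * \mathbb L(\langle \dot{\mathfrak D}^\alpha_i : i_\alpha \leq i < \kappa\rangle)$ closes the extension.

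\textbf{Conclusion.} For each $\nu \in S$, since $e_\nu$ is by construction forced into $\dot{\mathfrak D}^{\xi_\nu}$, Definition \ref{deflaver} ensures $\dot L_{\xi_\nu} \subseteq^* e_\nu$. Whenever $e_\nu \in \mathbb B_{i, \gamma}$ the real $\dot L_{\xi_\nu}$ is a member of the base $\{\dot L_{\xi_\nu}\} \cap \mathbb B_{i, \alpha}$ of $\dot{\mathfrak D}^\alpha_i$, so the filter generated by $\dot{\mathfrak D}^\alpha_i$ contains $e_\nu$. As the enumeration exhausts $\mathcal E$, this gives $\mathcal E \cap \mathbb B_{i, \gamma} \subseteq \dot{\mathfrak D}^\alpha_i$ for every $i < \kappa$, which is the required conclusion.

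\textbf{Main obstacle.} The hard part will be verifying at every Laver stage that the sequence $\langle \dot{\mathfrak D}^{\xi_\nu}_i : i < \kappa\rangle$ is genuinely $\vec\lambda(i_\alpha)$-thin per Definition \ref{lambdathin}, especially clause (3) asking $\Vdash_{\mathbb P_{j, \xi_\nu}} \langle \dot{\mathfrak D}^{\xi_\nu}_j\rangle \cap \mathbb B_{i, \xi_\nu} \subseteq \langle \dot{\mathfrak D}^{\xi_\nu}_i\rangle$ for $i < j < \kappa$. This has to propagate through the recursion using Lemma \ref{laver}, whose $V^{\mathbb P}$-names reformulation (its condition (2)) translates coordinate compatibility for the new Laver forcing into a statement about filter bases already certified by the induction. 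Combined with the hypothesis that $\mathcal E$ is itself a thin filter subbase and with the cross-coordinate canonicity of the Laver names $\dot L_{\xi_{\nu'}}$, this should close the induction, but the heaviest bookkeeping lies in checking that the descending mod-finite bases of cardinality $\omega_1$ required by clause (2) remain coherent through the complete-suborder chain of coordinates.
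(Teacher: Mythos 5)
Your overall shape is reasonable — a length-$\mu_{i_\gamma}$ extension whose Laver stages are spaced $\omega_1$ apart, with the final filter generated by the Laver generics $\dot L_{\xi_\nu}$ — and it does correctly anticipate the structure imposed by clauses (2) and (3) of Definition \ref{H}. But there are two genuine gaps, both of which the paper's proof is specifically engineered to avoid and which you have relegated to ``bookkeeping.''

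First, the stages $\xi$ with $i_\xi = 0$ are not trivial steps. By Corollary \ref{extL}, $i_\xi = 0$ means the Laver poset $\mathbb L(\dot{\mathfrak D}^\xi_i)$ is applied at \emph{every} column, so you still need to supply a thin sequence of filters for these stages. The paper does something essential here: it forces at all columns with $\mathbb L$ of a single fixed ground-model countably generated free filter $\mathcal A$, and then exploits the isomorphism (in $V^{\mathbb P_{i_\gamma,\xi}}$) between $\mathbb L(\langle\mathcal A\rangle)$ and $\mathbb L$ of a countably generated filter built from earlier $\dot D_\eta \cap \dot E_\xi$ to extract, at column $i_\gamma$, an auxiliary name $\dot D_\xi$ that is a pseudointersection of the $\dot D_\eta \cap \dot E_\xi$'s. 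Your proposal does not say what the filler filters are, and without this device you have no way to produce the $\omega_1$-descending tower at column $i_\gamma$ that clause (2) of Definition \ref{H} demands at $\xi = \eta + \omega_1$.

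Second, and more fundamentally, you place $e_\nu$ directly into $\dot{\mathfrak D}^{\xi_\nu}_i$ for every $i \geq i_\alpha$. But $\mathcal E \subseteq \mathbb B_{\kappa,\gamma}$, so a given $e_\nu$ lives in some $\mathbb B_{j,\gamma}$ with $j$ possibly well above $i_\alpha$; for $i_\alpha \leq i < j$ the name $e_\nu$ is simply not an element of $\mathbb B_{i,\xi_\nu}$, so the filter you describe is ill-defined at those columns. If you instead take the natural repair (include $e_\nu$ only when it is a column-$i$ name), condition (3) of Definition \ref{lambdathin} genuinely fails rather than being a bookkeeping matter: a $\mathbb P_{i,\xi_\nu}$-name $\dot b$ with $\dot b \supseteq^* \dot L_{\xi_{\nu-1}} \cap e_\nu$ (so $\dot b \in \langle\dot{\mathfrak D}^{\xi_\nu}_j\rangle \cap \mathbb B_{i,\xi_\nu}$) need not satisfy $\dot b \supseteq^* \dot L_{\xi_{\nu-1}}$, so $\dot b$ need not lie in $\langle\dot{\mathfrak D}^{\xi_\nu}_i\rangle$ when that filter is generated only by the Laver reals. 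The paper evades this cross-column incompatibility by first reducing (WLOG) to a base $\{\dot E_\xi : \xi < \mu_{i_\gamma}\}$ of $\langle\mathcal E\rangle \cap \mathbb B_{i_\gamma,\gamma}$ that lives entirely at column $i_\gamma$, and by feeding into each intermediate Laver stage only the column-$i_\gamma$ names $\dot E_\xi \cap \dot D_\eta$; the elements of $\mathcal E$ at higher columns are absorbed only at the very last stage, where the final filter $\dot{\mathfrak D}^\delta_i$ is defined as $\langle \dot{\mathfrak D}^\delta_{i_\gamma} \cup (\mathcal E \cap \mathbb B_{i,\gamma})\rangle$ and thinness is then supplied by Lemma \ref{addsmall} together with the thinness of $\mathcal E$ itself. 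Your proposal lacks both the column-$i_\gamma$ reduction and the $\dot D_\xi$ machinery, so the ``obstacle'' you flag at the end is not a loose end but the place where the argument actually breaks.
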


\begin{proof}
The case $i_\gamma = 0$ is trivial, so we assume 
 $i_\gamma>0$.
There is no loss of generality to assume that
 $\mathcal E\cap \mathbb B_{i_\gamma,\gamma}$ has character
 $\mu_{i_\gamma}$. Let $\{ \dot E_\xi : \xi < \mu_{i_\gamma}\}
\subset \mathcal E\cap \mathbb B_{i_\gamma,\gamma}$
 enumerate a filter base for $
\langle\mathcal E\rangle\cap \mathbb B_{i_\gamma,\gamma}$. We can
assume that this enumeration satisfies 
that $\dot E_\xi \setminus \dot E_{\xi+1}$ is forced to be infinite
for all $\xi < \mu_{i_\gamma}$. 
Let $\mathcal A$ be any countably generated  free filter on
$\omega$ that is not principal mod finite. 
By induction on $\xi <\mu_{i_\gamma}$ we define
 $\meso^{\gamma+\xi}$ by simply defining  $i_{\gamma+\xi}$ and
the sequence 
 $\langle \dot {\mathfrak D}^{\gamma+\xi}_i : i_{\gamma+\xi} \leq
 i\leq\kappa\rangle$. 
We will also recursively define, for each $\xi<\mu_{i_\gamma}$,
 a $\meso^{\gamma+\xi}$-name $\dot D_\xi$ such that
 $\meso^{\gamma+\xi}$ forces that $\dot D_\xi\subset \dot E_\xi$. 
An important induction hypothesis is that
 $\{ \dot D_\eta : \eta < \xi \} \cup \{ \dot E_\zeta : \zeta
 <\mu_{i_\gamma}\}\cup \mathcal E$ is forced to have the finite
 intersection 
 property. 

For each $\xi <
\gamma+\omega_1$,
 let $i_{\xi} = 0$ and $\dot{\mathfrak D}^\xi_i $
be the $\meso^\xi$-name $
\langle \mathcal A\rangle\cap \mathbb B_{i,\xi}$
 for all $i \leq\kappa$. 
The definition of $\dot D_0$ is simply $\dot E_0$. By recursion, for
each $\eta < \omega_1$ and $\xi = \eta+1$, we define 
 $\dot D_\xi $ to be the intersection of $\dot D_\eta$ and $\dot
 E_\xi$. For limit $\xi<\omega_1$, we note that
 $\mathbb P_{i_\gamma,\xi}$ forces
that $\mathbb L(\langle \mathcal A\rangle)$ is isomorphic to
 $\mathbb L(\langle \{ \dot D_\eta\cap \dot E_\xi : \eta < \xi
 \}\rangle )$. Therefore, we can let $\dot D_\xi$ be a 
 $\meso^{\xi+1}$-name for the generic real added by
 $\mathbb L(\langle \{ \dot D_\eta\cap \dot E_\xi : \eta < \xi
\}\rangle)$. A routine density argument shows that this definition
satisfies the induction hypothesis. 

The definition of $i_{\gamma+\omega_1}$ is $i_\gamma$ and the
definition of $\dot {\mathfrak D}^{\gamma+\omega_1}_{i_\gamma}$ is 
the filter generated by $\{ \dot D_\xi : \xi <\omega_1\}$. 
The definition of $\dot D_{\omega_1}$ is $\dot L_{\gamma+\omega_1}$.

Let $S$ denote the set of $\eta<\mu_{i_\gamma}$ with uncountable
cofinality. 
We now add  additional  induction hypotheses:
\begin{enumerate}
\item if $\zeta=\sup(S\cap \xi)<\xi$ and $\xi = \nu+1$, 
    then $\dot D_\xi = \dot D_\nu\cap \dot E_\xi$, and
 $i_\xi=0$ and $\dot{ \mathfrak D}^{\gamma+\xi}_i = \langle \mathcal
 A\rangle $ for all $i\leq \kappa$
\item if $\zeta=\sup(S\cap \xi)<\xi$ and $\xi $ is a limit of
  countable cofinality, then 
$i_\xi=0$ and $\dot{ \mathfrak D}^{\gamma+\xi}_i = \langle \mathcal
 A\rangle $ for all $i\leq \kappa$, and
$\dot D_\xi$ is forced
 by $\meso^{\gamma+\xi+1}$ to be 
the generic real added by $\mathbb L( \{ \dot D_\eta \cap \dot E_\xi : 
   \zeta\leq \eta<\xi\})$,
\item if $\zeta = \sup(S\cap \xi)$ and $\xi=\zeta+\omega_1$, 
  then $i_\xi = i_\gamma$, 
 $\dot {\mathfrak D}^{\gamma+\xi}_{i_\xi} $ is the filter generated by 
   $\{ \dot E_\xi \cap \dot D_\eta : \zeta\leq \eta < \xi\}$
and $\dot D_\xi$ is $\dot L_{\gamma+\xi}$,
\item if $S\cap \xi$ is cofinal in $\xi$ and $\cf(\xi)>\omega$,
 then $i_\xi = i_\gamma$ and
        $\dot {\mathfrak D}^{\gamma+\xi}_{i_\xi}$ is the filter
        generated by $\{ \dot D_{\gamma+\eta}  : \eta\in S\cap \xi\}$
 and $\dot D_\xi = \dot L_{\gamma+\xi}$,
\item if $S\cap \xi$ is cofinal in $\xi$ and $\cf(\xi)=\omega$,
 then $i_\xi = 0$  and
$\dot{ \mathfrak D}^{\gamma+\xi}_i = \langle \mathcal
 A\rangle $ for all $i\leq \kappa$, 
and 
$\dot D_\xi$ is forced  by $\meso^{\gamma+\xi+1}$ to be 
the generic real added by 
$\mathbb L( \{ \dot D_{\eta_n} \cap \dot E_\xi : 
n\in \omega\})$, where $
\{\eta_n : n\in \omega\}$ is some increasing cofinal 
subset of $S\cap (\gamma,\xi)$.
\end{enumerate}

It should be clear that the induction continues to stage
 $\mu_{i_\gamma}$ and that
 $\meso^{\gamma+\xi}\in \mathcal
 H(\vec\lambda(i_\gamma))$ for all $\xi\leq \mu_{i_\gamma}$,
 with $\beta_{\gamma_\xi} = \gamma$ being the witness to
Definition \ref{H} for all $\xi$ with $\cf(\xi)>\omega$.

 The final definition of the sequence
 $\langle \dot{\mathfrak D}^{\delta}_{i} : i_\delta = i_\gamma  \leq
 i \leq \kappa\rangle$, where $\delta = \gamma+\mu_{i_\gamma}$ is
that
 $\dot{\mathfrak D}^\delta_{i_\gamma} $ is the filter generated by
 $\{ \dot L_{\gamma+\xi} : \cf(\xi)>\omega\}$, and for $i_\gamma <
 i\leq \kappa$, $\dot {\mathfrak D}^\delta_i$ is the filter generated
 by $\dot {\mathfrak D}^\delta_{i_\gamma}\cup (\mathcal E\cap 
\mathbb B_{i,\gamma}$.  
\end{proof}

\begin{lemma} 
Suppose that 
$\mathcal E$ is a\label{addsmall}
 $(\meso^\gamma,\vec\lambda(i_\gamma))$-thin filter base.
Also assume that $i<\kappa$ and $\alpha\leq \gamma$
and $\mathcal E_1\subset \mathbb B_{i,\alpha}$  is a 
 $(\meso^\alpha,\vec\lambda(i_\gamma))$-thin filter base
satisfying that  $\langle\mathcal E\rangle
\cap \mathbb  B_{i,\alpha}\subset \langle\mathcal E_1\rangle$, 
then $\mathcal E\cup \mathcal E_1$ is
a $(\meso^\gamma,\vec\lambda(i_\gamma))$-thin filter subbase.
\end{lemma}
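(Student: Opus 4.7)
The plan is to verify, for $\mathcal F := \mathcal E \cup \mathcal E_1$, the three clauses of Definition~\ref{setlambdathin}. The cardinality bound $|\mathcal F| \leq \mu_{i_\gamma}$ is immediate. What remains is, for each $j < \kappa$, to show (a) that $\mathcal F_j := \mathcal F \cap \mathbb B_{j,\gamma}$ is $\mathbb P_{j,\gamma}$-forced to have the finite intersection property, and (b) the thin-sequence containment $\Vdash_{\mathbb P_{j_2,\gamma}} \langle \mathcal F_{j_2}\rangle \cap \mathbb B_{j_1,\gamma} \subseteq \langle \mathcal F_{j_1}\rangle$ for all $j_1 < j_2 < \kappa$.

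For (a), I would argue by contradiction. If $\dot e \in \langle \mathcal E \cap \mathbb B_{j,\gamma}\rangle$ and $\dot f \in \langle \mathcal E_1 \cap \mathbb B_{j,\gamma}\rangle$ have intersection forced to be finite by some $q \in \mathbb P_{j,\gamma}$, then in the full extension $V[G_{\kappa,\gamma}]$ one still has $\omega \setminus \dot f \supseteq^* \dot e$, which places $\omega \setminus \dot f$ in $\langle \mathcal E\rangle$. Since $\mathcal E_1 \subset \mathbb B_{i,\alpha}$ we have $\omega \setminus \dot f \in \mathbb B_{i,\alpha}$, and the hypothesis $\langle \mathcal E\rangle \cap \mathbb B_{i,\alpha} \subseteq \langle \mathcal E_1\rangle$ forces $\omega \setminus \dot f \in \langle \mathcal E_1\rangle$, contradicting that $\dot f \in \langle \mathcal E_1\rangle$ and $\langle \mathcal E_1\rangle$ is a proper filter.

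For (b), take $\dot x \in \langle \mathcal F_{j_2}\rangle \cap \mathbb B_{j_1,\gamma}$ witnessed by $\dot x \supseteq^* a \cap b$ with $a \in \langle \mathcal E_{j_2}\rangle$ and $b \in \langle \mathcal E_1 \cap \mathbb B_{j_2,\gamma}\rangle$. Setting $\dot y := \dot x \cup (\omega \setminus b)$ gives $a \subseteq^* \dot y$ and hence $\dot y \in \langle \mathcal E_{j_2}\rangle$. When $j_1 \geq i$, we have $\mathcal E_1 \subseteq \mathbb B_{j_1,\gamma}$, so both $\dot y \in \mathbb B_{j_1,\gamma}$ and $b \in \langle \mathcal E_1 \cap \mathbb B_{j_1,\gamma}\rangle$; thinness of $\mathcal E$ then gives $\dot y \in \langle \mathcal E_{j_1}\rangle$, and $\dot x \cap b = \dot y \cap b \in \langle \mathcal F_{j_1}\rangle$ yields $\dot x \in \langle \mathcal F_{j_1}\rangle$. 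When $j_1 < i$, I would first apply the thinness of $\mathcal E$ at level $i$ (that is, $\langle \mathcal E_{j_2}\rangle \cap \mathbb B_{i,\gamma} \subseteq \langle \mathcal E_i\rangle$) to produce a witness $a' \in \langle \mathcal E_i\rangle$ with $a' \cap b \subseteq^* \dot x$, and then invoke the $(\meso^\alpha,\vec\lambda(i_\gamma))$-thinness of $\mathcal E_1$ together with the identification $\mathcal E_1 \cap \mathbb B_{j_1,\gamma} = \mathcal E_1 \cap \mathbb B_{j_1,\alpha}$ to replace $b$ by a witness in $\mathbb B_{j_1,\alpha}$, after which the same $\dot y$-rearrangement can be carried out at level $j_1$.

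The principal obstacle I anticipate is precisely the case $j_1 < i$ of (b): the witness $b$ is a priori only a $\mathbb B_{i,\alpha}$-name, and extracting from it a usable replacement in $\mathcal E_1 \cap \mathbb B_{j_1,\alpha}$ that still bounds $\dot x$ from below (modulo $a'$) requires a careful two-step descent, first through the $\meso^\gamma$-thinness of $\mathcal E$ and then through the $\meso^\alpha$-thinness of $\mathcal E_1$. The key technical fact is the commutativity of these restrictions, which flows from $\mathbb P_{j_1,\alpha}$ being the complete-suborder meet of $\mathbb P_{j_1,\gamma}$ and $\mathbb P_{i,\alpha}$ inside $\mathbb P_{\kappa,\gamma}$; once this identification is in hand, the two thinness hypotheses combine cleanly and (b) closes in both cases.
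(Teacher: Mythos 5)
Your opening moves (set $\mathcal F = \mathcal E \cup \mathcal E_1$, reduce to the clauses of Definition~\ref{setlambdathin}, handle the filter property of $\mathcal F_j$ by pushing $\omega\setminus\dot f$ into $\langle\mathcal E\rangle\cap\mathbb B_{i,\alpha}\subseteq\langle\mathcal E_1\rangle$, and dispose of the case $j_1\geq i$ by the rearrangement $\dot y := \dot x\cup(\omega\setminus b)$) are correct and in the spirit of the paper, which likewise declares the filter-base clause immediate and concentrates entirely on the containment clause.

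The gap is the case $j_1 < i$, which you correctly flag as the principal obstacle but do not in fact resolve. The trouble is in your descent order. You first apply $\mathcal E$-thinness to bring $a$ down from level $j_2$ to level $i$, obtaining $a'\in\langle\mathcal E_i\rangle$ with $a'\cap b\subseteq^*\dot x$; you then propose to apply $\mathcal E_1$-thinness to descend $b$ to level $j_1$ and run ``the same $\dot y$-rearrangement at level $j_1$.'' But after your first step, $a'$ lives in $\mathbb B_{i,\gamma}$, not $\mathbb B_{j_1,\gamma}$, so the name $\dot x\cup(\omega\setminus a')$ that you would now need to feed into $\mathcal E_1$-thinness is neither a $\mathbb P_{j_1,\cdot}$-name (wrong row) nor a $\mathbb P_{\cdot,\alpha}$-name (wrong column), while the $(\meso^\alpha,\vec\lambda(i_\gamma))$-thinness of $\mathcal E_1$ only speaks about $\mathbb B_{\cdot,\alpha}$-names. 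There is nothing for the hypothesis to act on. Appealing to ``commutativity of restrictions'' and the ``complete-suborder meet'' $\mathbb P_{j_1,\alpha}$ names the right background structure, but it does not produce the required $\mathbb P_{j_1,\alpha}$-name; you would need a simultaneous two-directional projection, and that is precisely the unavailable step.

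The paper's proof takes the descent in the opposite (and crucial) order. Working with the complementary set $\dot b$, it first applies $\mathcal E$-thinness to descend in the \emph{row} direction all the way from $j_2$ to $j_1$, producing $\dot E_2\in\langle\mathcal E\rangle\cap\mathbb B_{j_1,\gamma}$ with $\dot b\cap\dot E_2\subseteq\dot E_0$ and hence $\dot b\cap\dot E_2\cap\dot E_1=\emptyset$. Now the relevant object $\dot b\cap\dot E_2$ is already a $\mathbb P_{j_1,\gamma}$-name, so one can project in the \emph{column} direction only: there is a nice $\mathbb P_{j_1,\alpha}$-name $(\dot b\cap\dot E_2)\restriction\alpha$ that contains $\dot b\cap\dot E_2$ and is still forced disjoint from $\dot E_1$. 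This last disjointness is exactly where the matrix structure is used (the quotient $\mathbb P_{j_1,\gamma}/\mathbb P_{j_1,\alpha}$ does not disturb what is decided at stage $\alpha$ on the $j_2$ row). Only then is $\mathcal E_1$-thinness invoked, giving $\dot E_3\in\langle\mathcal E_1\cap\mathbb B_{j_1,\alpha}\rangle$ disjoint from $(\dot b\cap\dot E_2)\restriction\alpha$, and $\dot E_2\cap\dot E_3$ is the witness. To repair your argument you should reorganize it along these lines: descend in the row direction to $j_1$ before projecting in the column direction to $\alpha$, so that each projection is one-dimensional, and make the intermediate restriction name explicit.
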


\begin{proof}
Let $\mathcal E_2$ be equal to $\mathcal E\cup \mathcal E_1$. 
The fact that each member of the sequence
 $\langle \dot{\mathcal D}_j = 
\langle \mathcal E_2\cap \mathbb B_{j,\gamma}\rangle :
 j<\kappa\rangle $ is a name of a filter base with character at most
 $\mu_{i_\gamma}$ is immediate. 
Now we verify that if $j_1 < j_2 < \kappa$, then 
 $\Vdash_{\mathbb P_{j_2,\gamma} } \dot{\mathcal D}_{j_2}\cap \mathbb
 B_{j_1,\gamma} \subset \dot{\mathcal D}_{j_1}$.
Let $\dot b\in \mathbb B_{j_2,\gamma}$ and suppose there are
 $p\in \mathbb P_{j_2,\gamma}$, $\dot E_0\in \mathcal E\cap \mathbb
 B_{j_2,\gamma}$, and $\dot E_1 \in \mathcal E_1$ such that
 $p\Vdash b\cap \dot E_0 \cap \dot E_1$. It suffices to produce
an $\dot E\in \langle\mathcal E_2\rangle
\cap \mathbb B_{j_1,\gamma}$ satisfying that
 $p\Vdash \dot b\cap \dot E=\emptyset$. First, using that $\mathcal E$
 is $(\meso^\gamma,\vec\lambda(i_\gamma))$-thin, choose $\dot E_2\in
\langle \mathcal E\rangle\cap \mathbb B_{j_1,\gamma}$ such that 
 $p\Vdash (\dot b\setminus \dot E_0) \cap \dot E_2
 =\emptyset$. Equivalently, we have that 
 $p\Vdash (\dot b\cap \dot E_2)\subset \dot E_0$, and therefore
$p\Vdash (\dot b\cap \dot E_2)\cap \dot E_1 =\emptyset$. Since $\dot
E_1$ is a $\mathbb P_{j_2,\alpha}$-name, there is a $\mathbb
P_{j_1,\alpha}$-name (which we can denote as)
 $(\dot b\cap \dot E_2)\restriction\alpha$ satisfying that
 $p\Vdash 
 \dot E_2\cap (\dot b\cap \dot E_2)\restriction\alpha $ is empty
and that 
$p\Vdash 
(\dot b\cap \dot E_2)\subset
(\dot b\cap \dot E_2)\restriction\alpha$. Now using  that
$\mathcal E_1$ is $(\meso^\alpha,\vec\lambda(i_\gamma))$-thin,
 choose $\dot E_3 \in \langle \mathcal E_1\rangle \cap \mathbb
 B_{j_1,\alpha}$ so that
$p\Vdash \dot E_3 \cap (\dot b\cap \dot E_2)\restriction\alpha$ is
empty. Naturaly we have that
$p\Vdash \dot E_3 \cap (\dot b\cap \dot E_2)$ is also empty.
This completes the proof since $\dot E_2\cap \dot E_3$ is
in $\langle \mathcal E_2\rangle \cap \mathbb B_{j_1,\gamma}$.
\end{proof}

Let $\meso^\gamma\in \mathcal H(\vec\lambda)$ and let
 $\dot y\in \mathbb B_{\kappa,\gamma}$. 
For a family  $\mathcal E\subset \mathbb B_{\kappa,\gamma}$
and condition  $p\in \meso^\gamma$ say that $p$
forces that $\mathcal E$
measures $\dot y$ if $p\Vdash_{\meso^\gamma} \{\dot y,\omega\setminus
\dot y\}\cap \langle \mathcal E\rangle \neq\emptyset$. 
Naturally we will just say that $\mathcal E$
 measures $\dot y$ if
$1 $ forces that $\mathcal E$ measures $\dot y$.

Given Lemma \ref{nodirection}, it will now suffice to prove.

\begin{lemma}
If $\mathcal Y\subset \mathbb B_{\kappa,\gamma}$ for some
$\meso^\gamma\in \mathcal H(\vec \lambda)$ and $|\mathcal Y|\leq
\mu_{i_\gamma}$ for some $i_\gamma<\kappa$, then there is
a $(\meso^\gamma,\vec\lambda(i_\gamma))$-thin filter\label{existsthin}
$\mathcal E\subset \mathbb B_{\kappa,\gamma}$ that measures every
element of $\mathcal Y$. 
\end{lemma}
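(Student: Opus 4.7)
The plan is to enumerate $\mathcal Y = \{\dot y_\alpha : \alpha < \mu_{i_\gamma}\}$ and build $\mathcal E = \bigcup_{\alpha < \mu_{i_\gamma}} \mathcal E_\alpha$ as an increasing chain of $(\meso^\gamma,\vec\lambda(i_\gamma))$-thin filter subbases with $|\mathcal E_\alpha| \leq |\alpha|+\omega$, arranged so that $\mathcal E_{\alpha+1}$ measures $\dot y_\alpha$. By ccc and uncountable regularity of $\kappa$, every nice $\mathbb P_{\kappa,\gamma}$-name of a subset of $\omega$ is actually a $\mathbb P_{j,\gamma}$-name for some $j < \kappa$, so in particular each $\dot y_\alpha \in \mathbb B_{j_\alpha,\gamma}$ for some $j_\alpha < \kappa$. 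I initialize $\mathcal E_0$ as the cofinite filter base (all of whose members are check-names, hence in $\mathbb B_{0,\gamma}$), and take unions at limit stages; since $\mu_{i_\gamma}$ is a regular successor cardinal and each $|\mathcal E_\alpha| < \mu_{i_\gamma}$, both the character bound and the thinness condition pass to unions.

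At a successor stage I first define a $\mathbb P_{\kappa,\gamma}$-name $\dot z_\alpha$ by setting $\dot z_\alpha = \dot y_\alpha$ on the open dense set of conditions forcing $\dot y_\alpha \in (\langle \mathcal E_\alpha\rangle)^+$, and $\dot z_\alpha = \omega \setminus \dot y_\alpha$ otherwise; by ccc, $\dot z_\alpha \in \mathbb B_{j_\alpha',\gamma}$ for some $j_\alpha' < \kappa$. Then $\dot z_\alpha$ is forced to lie in $(\langle \mathcal E_\alpha\rangle)^+$, so $\mathcal E_\alpha \cup \{\dot z_\alpha\}$ still has the finite intersection property and measures $\dot y_\alpha$. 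The delicate point is preserving thinness at pairs $i < j$ with $j \geq j_\alpha'$: adding $\dot z_\alpha$ at level $j$ can create new inclusions $\bar E \cap \dot z_\alpha \subseteq \dot b$ with $\dot b \in \mathbb B_{i,\gamma}$ and $\bar E$ a finite intersection from $\mathcal E_\alpha \cap \mathbb B_{j,\gamma}$, for which no matching witness exists in $\mathcal E_\alpha \cap \mathbb B_{i,\gamma}$. When this happens I enlarge $\mathcal E_{\alpha+1}$ by an appropriate $\mathbb B_{i,\gamma}$-name and invoke Lemma \ref{addsmall} to verify that the enlargement remains thin.

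The production of these lower-level witnesses proceeds by induction on $\gamma$, exploiting the hypothesis $\meso^\gamma \in \mathcal H(\vec\lambda)$. For the base case $\gamma=1$, the iterands are $\operatorname{Fn}(\lambda_i,2)$, and the Cohens indexed by $\lambda_{j_\alpha'} \setminus \lambda_i$ are product-independent of $\mathbb P_{i,1}$, so any forced inclusion of the form $\bar E \cap \dot z_\alpha \subseteq \dot b$ with $\dot b \in \mathbb B_{i,1}$ reduces mod finite to $\bar E \subseteq \dot b$, putting $\dot b$ already into $\langle \mathcal E_\alpha \cap \mathbb B_{i,1}\rangle$. For $\gamma = \beta+1$ with iterand $\mathbb L(\dot{\mathfrak D}^\beta_\kappa)$, I use the rank-function analysis from Definition \ref{rank} (as in the proof of Lemma \ref{laverpreserves}) to decompose $\dot z_\alpha$ along trees in $\mathbb L(\dot{\mathfrak D}^\beta_{j_\alpha'})$ into a countable catalog of $\mathbb B_{j_\alpha',\beta}$-approximations, to which the induction hypothesis at $\beta$ applies. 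For limit $\gamma$, each $\dot y_\alpha$ already lies in $\mathbb B_{j_\alpha,\gamma'}$ for some $\gamma' < \gamma$, so the induction at $\gamma'$ produces a thin subbase in $\mathbb B_{\kappa,\gamma'}$ measuring $\dot y_\alpha$, which Lemma \ref{addsmall} amalgamates with $\mathcal E_\alpha$. The hardest step is the Laver successor: one must control how $\dot z_\alpha$'s dependence on $\dot L_\beta$ propagates to lower levels, and this is where the $\omega_1$-stratified structure of Definition \ref{H} is essential, since it guarantees that each $\dot{\mathfrak D}^\xi_{i_\xi}$ is generated by earlier Laver generics $\dot L_\eta$ which themselves admit natural rank-based projections downward, making the catalog of approximations tractable within the inductive framework.
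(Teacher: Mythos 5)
Your opening moves match the paper's: reducing to measuring one name at a time, and replacing $\dot y_\alpha$ by the forced-positive name $\dot z_\alpha$ so that no condition puts $\omega\setminus\dot z_\alpha$ in the filter. The union-at-limits bookkeeping and the character bound $\le\mu_{i_\gamma}$ are also fine, and you correctly identify that all the work is in restoring the thinness clause (3) of Definition \ref{lambdathin} when the new name lives at a high $i$-level. But the mechanism you propose for that step is a gesture, not an argument, and it papers over exactly the part the paper spends Facts 2--6 and the final Claim on.

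The concrete gaps. First, your ``countable catalog of $\mathbb B_{j_\alpha',\beta}$-approximations'' produced by the rank-function decomposition of $\dot z_\alpha$ along trees of $\mathbb L(\dot{\mathfrak D}^\beta_{j_\alpha'})$ lowers the $\alpha$-coordinate but stays at the same $i$-level $j_\alpha'$, whereas the thinness obstruction you correctly isolated is at pairs $i<j_\alpha'$; nothing in your sketch produces the required $\mathbb B_{i,\gamma}$-witness. Second, you give no reason why the approximations (the paper's $\dot E_t$) should already belong to the filter $\langle\mathcal E_\alpha\rangle$ --- this is the content of the paper's Fact~\ref{EsinE} and it depends essentially on the maximal-antichain decomposition $A,A_1$ (Fact~2) together with the minimality reductions forcing $\alpha_{\dot y}$ to be a successor (Fact~3), $i_{\dot y}=i_\alpha$ (Fact~4), and the character of $\dot{\mathfrak D}^\alpha_{i_\alpha}$ to exceed $\mu_{i_\gamma}$ (Fact~\ref{character}). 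Third, you give no mechanism to collect the catalog into a single filter element that measures $\dot z_\alpha$; the paper does this by taking an elementary submodel $M$ of size $\mu_{i_\gamma}$ with $M^{\lambda_j}\subset M$, setting $\beta=\sup(M\cap\alpha)$ (strictly below $\alpha$ by Fact~\ref{character}), and exploiting the $\mathcal H(\vec\lambda)$ structure so that $\dot L_\beta$ dominates $M\cap\dot{\mathfrak D}^\alpha_{i_\alpha}$; the name actually added is $\omega\setminus(\dot x[\dot L_\beta])$ at stage $\beta+1<\alpha+1$ and level $i_\alpha$, and the closing Claim (projecting through $\dot L_\beta$ and the $\dot E_t$'s) is what secures thinness at levels $i<i_\alpha$. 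None of this is recoverable from an outside induction on the iteration length $\gamma$ together with ``natural rank-based projections''; the paper's induction is internal, on the pair $(i_{\dot y},\alpha_{\dot y})$ with $\gamma$ fixed, and the reduction step lowers $\alpha_{\dot y}$ while holding $i_{\dot y}$ fixed --- a quite different recursion from the one you set up.

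Your Cohen base case also leans on an unproven ``reduces mod finite'' step; it is plausible by mutual genericity but would need to be argued, not asserted. As it stands, the proposal captures the right intuition about where the difficulty lives but does not contain the construction that resolves it.
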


In fact,  to prove Lemma \ref{existsthin}, it is evidently sufficient
to prove:

\begin{lemma}
If $\meso^\gamma\in \mathcal H(\vec \lambda)$,
  $\dot y\in \mathbb B_{\kappa,\gamma}$,
and if $\mathcal E$ is
a $(\meso^\gamma,\vec\lambda(i_\gamma))$-thin filter,
 then
 there is a family $\mathcal E_1\supset \mathcal E$
measuring $\dot y$ 
that is also 
a $(\meso^\gamma,\vec\lambda(i_\gamma))$-thin filter.
\end{lemma}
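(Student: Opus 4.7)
The plan is to set $\mathcal E_1=\mathcal E\cup\{\dot z\}$ where $\dot z$ is a single nice name that always equals one of $\dot y$, $\omega\setminus\dot y$ and is forced to be positive with respect to $\mathcal E$. Since $\dot y$ is a nice $\mathbb P_{\kappa,\gamma}$-name, hence countable, we have $\dot y\in\mathbb B_{i_y,\gamma}$ for some $i_y<\kappa$, and we may take $i_y\geq i_\gamma$.

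The key structural input is an absoluteness property of positivity: for every $i\geq i_y$, $\omega\setminus\dot y\in\langle\mathcal E\cap\mathbb B_{i,\gamma}\rangle$ iff $\omega\setminus\dot y\in\langle\mathcal E\cap\mathbb B_{i_y,\gamma}\rangle$. Indeed, if some $\dot E\in\mathcal E\cap\mathbb B_{i,\gamma}$ has $\dot y\cap\dot E$ finite, then $\omega\setminus\dot y\supseteq^*\dot E$, so because the filter is free $\omega\setminus\dot y$ itself belongs to $\langle\mathcal E\cap\mathbb B_{i,\gamma}\rangle$; thinness condition~(3) of $\mathcal E$ descends this to level $i_y$ because $\omega\setminus\dot y\in\mathbb B_{i_y,\gamma}$. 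Hence positivity of $\dot y$ against $\mathcal E$ is decided at level $i_y$. Define $\dot z$ as the $\mathbb P_{i_y,\gamma}$-name equal to $\dot y$ when $\dot y\in\langle\mathcal E\cap\mathbb B_{i_y,\gamma}\rangle^+$ in $V[G_{i_y,\gamma}]$ and equal to $\omega\setminus\dot y$ otherwise; the standard dichotomy (both failing at once would produce $\dot F_1,\dot F_2\in\langle\mathcal E\rangle$ with $\dot F_1\cap\dot F_2$ finite) ensures $\dot z$ is forced to be positive, and by absoluteness it remains positive against $\mathcal E\cap\mathbb B_{i,\gamma}$ for every $i\geq i_y$. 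Hence $\mathcal E_1$ generates a proper filter at each level with $|\mathcal E_1|\leq\mu_{i_\gamma}$, and measures $\dot y$ since $\dot z\in\langle\mathcal E_1\rangle$ is forced and $\dot z\in\{\dot y,\omega\setminus\dot y\}$.

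It remains to verify thinness of $\mathcal E_1$. The easy regimes are $j_1<j_2<i_y$ (nothing has changed) and $i_y\leq j_1<j_2$: in the latter, if $\dot b\in\mathbb B_{j_1,\gamma}$ satisfies $\dot b\supseteq\dot F\cap\dot z$ for some $\dot F\in\langle\mathcal E\cap\mathbb B_{j_2,\gamma}\rangle$, then $\dot b\cup(\omega\setminus\dot z)$ is a $\mathbb P_{j_1,\gamma}$-name (since $\omega\setminus\dot z\in\mathbb B_{i_y,\gamma}\subseteq\mathbb B_{j_1,\gamma}$) containing $\dot F$, hence lies in $\langle\mathcal E\cap\mathbb B_{j_2,\gamma}\rangle$ and so by thin of $\mathcal E$ in $\langle\mathcal E\cap\mathbb B_{j_1,\gamma}\rangle$; this produces $\dot F'\in\mathcal E\cap\mathbb B_{j_1,\gamma}$ with $\dot F'\cap\dot z\subseteq\dot b$, as required. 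The hard case, and the main obstacle, is $j_1<i_y\leq j_2$: now $\omega\setminus\dot z\notin\mathbb B_{j_1,\gamma}$, so the analogous reduction only delivers a witness at level $i_y$ and does not directly yield membership in $\langle\mathcal E\cap\mathbb B_{j_1,\gamma}\rangle$. I would address this either by a genericity argument---exploiting the $\mathbb P_{i_y,\gamma}/\mathbb P_{j_1,\gamma}$-freedom in choosing the generic above $G_{j_1,\gamma}$ to force such a $\dot b$ to be cofinite and hence already in the free filter at level $j_1$---or by augmenting $\mathcal E_1\cap\mathbb B_{j_1,\gamma}$ with a controlled family of projection witnesses of total size at most $\mu_{i_\gamma}$. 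This final step is where the specific structure of $\meso^\gamma\in\mathcal H(\vec\lambda)$ and the controlled form of the filter bases $\dot{\mathfrak D}^\alpha_i$ would likely play their role.
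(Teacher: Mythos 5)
Your proposal stalls exactly where the work is: the regime $j_1 < i_{\dot y} \le j_2$. That case is not a corner case — it \emph{is} the lemma. And your reduction to a single auxiliary name $\dot z \in \{\dot y,\,\omega\setminus\dot y\}$ cannot survive it. If some $\dot b\in\mathbb B_{j_1,\gamma}$ with $j_1<i_{\dot y}$ is positive for $\langle\mathcal E\cap\mathbb B_{j_1,\gamma}\rangle$ but is forced almost disjoint from $\dot z$, then $\omega\setminus\dot b$ lies in $\langle(\mathcal E\cup\{\dot z\})\cap\mathbb B_{i_{\dot y},\gamma}\rangle$ yet, being a $\mathbb P_{j_1,\gamma}$-name, cannot be reflected into $\langle\mathcal E\cap\mathbb B_{j_1,\gamma}\rangle$ (since $\dot b$ was positive there). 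So $\mathcal E\cup\{\dot z\}$ already violates condition (3) of Definition~\ref{lambdathin} at the pair $(j_1, i_{\dot y})$, and no choice between $\dot y$ and $\omega\setminus\dot y$ repairs this in general. Your ``absoluteness of positivity'' observation and your computation for $i_{\dot y}\le j_1<j_2$ are both fine, but they do not touch the genuine obstruction, which you yourself flag as unresolved.

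What the paper actually does is entirely different in scale. It runs a double induction on $(i_{\dot y},\alpha_{\dot y})$ and fixes an elementary submodel $M$ with $M^{\lambda_j}\subset M$; the inductive hypothesis is used (via Lemma~\ref{nodirection}) to produce an intermediate family $\mathcal E_1\supset\mathcal E$ that already measures everything in $M\cap\mathbb B_{j,\gamma}$ for $j<i_{\dot y}$ and in $M\cap\mathbb B_{i_{\dot y},\beta}$ for $\beta\in M\cap\alpha_{\dot y}$. Fact~2 isolates, along a maximal antichain $A$, the conditions $p\in A\setminus A_1$ where adding either $\dot y$ or $\omega\setminus\dot y$ directly would break thinness at some level $i_p<i_{\dot y}$. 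Facts~3 and~4 pin down $\alpha_{\dot y}=\alpha+1$ with $i_{\dot y}=i_\alpha$, so $\dot y$ is essentially an $\mathbb L(\dot{\mathfrak D}^\alpha_{i_\alpha})$-name over $\mathbb P_{i_\alpha,\alpha}$. Fact~5 uses the $\mathcal H(\vec\lambda)$ structure (descending mod-finite $\omega_1$-bases, the $\dot L_\eta$'s) to show the character of $\dot{\mathfrak D}^\alpha_{i_\alpha}$ exceeds $\mu_{i_\gamma}$, which makes $\beta=\sup(M\cap\alpha)<\alpha$. Only then can the name that is actually added be constructed: it is $\omega\setminus(\dot x[\dot L_\beta])$, where $\dot x[\dot L_\beta]$ is built by ``dropping'' the Laver condition at coordinate $\alpha$ to the earlier coordinate $\beta$ and substituting the generic real $\dot L_\beta$ as a stem; this name is genuinely different from both $\dot y$ and $\omega\setminus\dot y$, and the final Claim shows it keeps condition~(3) for $j_1<i_\alpha$ precisely because the $\mathcal H(\vec\lambda)$ definition ties $\dot{\mathfrak D}^\alpha_{i_\alpha}$ to $\dot{\mathfrak D}^\beta_{i_\beta}$ via the $\dot L_\xi$'s. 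Your two closing suggestions (a $\mathbb P_{i_y,\gamma}/\mathbb P_{j_1,\gamma}$-genericity argument, or adding ``projection witnesses'') gesture in roughly the right direction, but neither is developed, and the first, taken literally, does not give a name in $\mathcal E_1$ that one could certify as thin; so the gap is real and unclosed.
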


\begin{proof}
Throughout the proof
 we suppress mention of
$\meso^\gamma$ and refer instead to
component member posets $ 
\mathbb P_{i,\alpha}, \dot {\mathbb Q}_{i,\alpha}$
 of $\meso^\gamma$. 
Let $i_{\dot y}$ be minimal such
that $\dot y$ is in $\mathbb B_{i_{\dot y},\gamma}$.
Proceeding by induction, we can assume that the lemma holds  for all
$\dot x\in  \mathbb B_{j,\gamma}$ and all $j<i_{\dot y}$.

We can replace $\dot y$ by any $\dot x\in \mathbb B_{i_{\dot
    y},\gamma}$ that has the property that
 $1 \Vdash \dot x\in \{\dot y, \omega\setminus \dot y\}$ since
if we measure  $\dot x$ then we also measure $\dot y$. With this
reduction then we can assume that no condition forces
that $\omega\setminus \dot y$ is in the filter generated by 
 $\mathcal E$.

\begin{fact} If $i_{\dot y}\leq i_\gamma$, then
there is a $\dot E\in \mathbb B_{i_{\dot y},\gamma}$
such that  
$\mathcal E \cup \{\dot E\}$ is contained 
 a $(\meso^\gamma,\vec\lambda(i_\gamma))$-thin filter
that measures $\dot y$.
\end{fact}

\bgroup

\def\proofname{Proof of Fact 1}

\begin{proof}
It is immediate that 
 $\langle \{\dot y\} \cup (\mathbb B_{i_{\dot y},\gamma}\cap 
\mathcal E)\rangle $ is 
a  $(\meso^\gamma,\vec\lambda(i_\gamma))$-thin filter. 
Therefore, by Lemma \ref{addsmall}, $\mathcal E\cup \{\dot y\}$ 
is a $(\meso^{\gamma},\vec\lambda(i_{\gamma}))$-thin filter subbase.
\end{proof}

\egroup

We may thus assume that $0<i_{\dot y}$ and that
 the Lemma has been proven for all
members of $\mathbb B_{i,\gamma}$ for all $i< i_{\dot y}$.
Similarly, 
 let $\alpha_{\dot y}$ be minimal so 
that $\dot y\in \mathbb B_{i_{\dot y},\alpha_{\dot y}}$,
and assume that the Lemma has been proven for all members of $\mathbb
B_{i_{\dot y},\beta}$ for all $\beta < \alpha_{\dot y}$. 
We skip proving the easy case when $\alpha_{\dot y}=1$ and
henceforth assume that $1<\alpha_{\dot y}$. 
Notice also that $\alpha_{\dot y}$ has countable cofinality
since $\mathbb P_{i_{\dot y},\gamma}$ is ccc.

Now choose an elementary submodel $M$ 
  of $H((2^{\lambda\cdot\gamma})^+) $
  containing $\vec\lambda, \meso^\gamma,\mathcal E,\dot y$  and
  so that  $M$ has 
  cardinality equal to  $\mu_{i_\gamma}$
  and, by our cardinal assumptions,
 $M^{\lambda_j} \subset M$ 
for all $j<i_\gamma$.
Naturally this implies that $M^\omega\subset M$.

By the inductive assumption we may assume that there is
an $\mathcal E_1\supset \mathcal E$ that is
$(\meso^\gamma,\vec\lambda(i_\gamma))$-thin and measures every
element of $M\cap \mathbb B_{j,\gamma}$ for $j<i_{\dot y}$
as well as every element of $M\cap \mathbb B_{i_{\dot y},\beta}$
for all $\beta \in M\cap \alpha_{\dot y}$. 
Moreover, it is easily checked that we can assume that
$\mathcal E_1$ is a subset of $M$. Furthermore,
we may assume that $\mathcal E_1$ contains  a maximal
family of subsets of $M\cap \mathbb B_{i_{\dot y}, \alpha_{\dot y}}$
that forms a  $(\meso^\gamma,\vec\lambda(i_\gamma))$-thin 
filter subbase.

\begin{fact} There is a maximal antichain $A\subset 
 \mathbb P_{i_{\dot y},\gamma}$  and a subset  $A_1\subset A$ 
such that 
\begin{enumerate}
\item each $p\in A_1$ forces that $\mathcal E_1$ measures $\dot y$,
\item  for each $p\in A\setminus A_1$,
 $p$ forces that
there is an $i_p < i_{\dot y}$ such that
$\mathbb B_{i_p,\gamma}\cap \langle \mathcal E_1\cup \{\dot y\}\rangle$ 
 is not generated by the elements in $M$,
\item  for each $p\in A\setminus A_1$,
 $p$ forces that
there is a $j_p < i_{\dot y}$ such that $i_p\leq j_p$
and
$\mathbb B_{j_p,\gamma}\cap \langle \mathcal E_1\cup \{\omega\setminus  
\dot y\}\rangle$ is not generated by the elements in $M$.
\end{enumerate}
\end{fact}

\bgroup
\def\proofname{Proof of Fact 2}
\begin{proof}
Suppose that $p\in \mathbb P_{i_{\dot y},\gamma}$ forces
 that the conclusion  (2) fails.
We have already arranged that
 $p\Vdash_{\mathbb P_{i_{\dot y},\gamma}} 
 \dot y \in \langle \mathcal E_1\cap \mathbb B_{i_{\dot y},\gamma}
 \rangle^+$.  
Define $\dot E\in \mathbb B_{i_{\dot y},\gamma}$
so that $p$ forces $\dot E = \dot y$ and each $q\in 
\mathbb P_{i_{\dot y},\gamma}\cap {p}^\perp$ 
forces that $\dot E = \omega$. It is easily checked
that $\mathbb B_{i_{\dot y},\gamma}\cap \langle
\mathcal E_1\cup \{\dot E\}\rangle$ is then 
$(\meso^\gamma,\vec\lambda(i_\gamma))$-thin and that
 $p$ forces that it measures $\dot y$.
This condition ensures that $p$ is compatible with an
element of  $ A_1$.

If (2) holds but (3) fails, then by a symmetric argument as in
the previous paragraph we can again define  $\dot E$ so 
that $\mathbb B_{i_{\dot y},\gamma}\cap \langle
\mathcal E_1\cup \{\dot E\}\rangle$ is then 
$(\meso^\gamma,\vec\lambda(i_\gamma))$-thin and that
 $p$ forces that it measures $\omega\setminus \dot y$.
\end{proof}
\egroup

If by increasing $M$ we can enlarge $A_1$ we simply do so. Since
$\meso^\gamma$ is ccc we may assume that this is no longer possible,
and therefore we may also assume that $A$ is a subset of $M$.
Now we choose any $p\in A\setminus A_1$. It suffices to produce
an $\dot E_p\in \mathbb B_{i_{\dot y},\gamma}$ that can be added
to $\mathcal E_1$ that measures $\dot y$ and satisfies
that $q\Vdash \dot E_p = \omega$ for all $q\in p^\perp$. This is
because we then have that $\mathcal E_1 \cup \{ \dot E_p : p\in
A\setminus A_1\}$ is contained in a $\vec\lambda(i_\gamma)$-thin
filter that measures $\dot y$.

\begin{fact} There is an $\alpha$ such that $\alpha_{\dot y}  = \alpha
  +1$.
\end{fact}

\bgroup

\def\proofname{Proof of Fact 3}

\begin{proof}

Otherwise, let $j=i_p$ and
for each $r<p$ in $\mathbb P_{i_{\dot y},\alpha_{\dot y}}$,
choose $\beta \in M\cap \alpha_{\dot y}$ such that $r\in 
\mathbb P_{i_{\dot y},\beta}$,
 and define a name
$\dot y[r]$  in $M\cap \mathbb B_{j,\gamma}$
 according to $(\ell,q)\in \dot y[r]$
providing there is a pair 
$(\ell,p_\ell)\in \dot y$ such that 
 $q<_j p_\ell$ and
$q\restriction\beta $ is in the set
$M \cap \mathbb P_{j,\beta}
\setminus (r\wedge p_\ell\restriction\beta)^\perp $  . 
This set, namely $\dot y[r]$,
 is in $M$ because $\mathbb P_{j,\beta}$ is ccc 
and $M^\omega\subset M$.

We prove that
 $r$ forces that $\dot y[r]$ contains $\dot y$.
 Suppose that $r_1 < r$ and there is a pair
 $(\ell,p_\ell)\in \dot y$ with $r_1 < p_\ell$. 
Choose an $r_2\in \mathbb P_{j,\gamma}$ so that $ r_2 <_j
r_1$. It suffices to show $r_2\Vdash \ell\in
 \dot y[r]$.
Let $q<_j p_\ell$ with $q\in M$.   Then $r_2\not\perp p_\ell$ implies
 $r_2\not\perp q$. Since $r_2$ was any $<_j$-projection of
 $r_1$ we can assume that $r_2<q$. Since $r_2\restriction\beta$ is
in $(\mathbb P_{j,\beta}\cap (r\wedge
p_\ell\restriction\beta)^\perp)^\perp$, it follows 
that $q\restriction \beta \notin (r\wedge
p_\ell\restriction\beta)^\perp$. This implies that $(\ell,q)\in \dot
y[r]$ and completes the proof that $r_2\Vdash \ell \in \dot y[r]$.

Now assume that $\beta<\alpha_{\dot y}$ and
$r \Vdash \dot b \cap \dot E \cap \dot y$
is empty for some $r<p$ in $\mathbb P_{i_{\dot y},\beta}$,
 $\dot b\in \mathbb B_{j,\gamma}$, and
 $\dot E\in \mathcal E_1\cap \mathbb B_{i_{\dot y},\gamma}$. 
Let $\dot x = (\dot E\cap \dot y)[r]$ (defined as above
for $\dot y[r]$).  We complete the proof of Fact 3 by proving that
 $r\Vdash \dot b \cap \dot x$ is empty. Since each are
 in $\mathbb B_{j,\gamma}$, we may choose any $r_1 <_j r$,
 and assume that $r_1 \Vdash \ell \in \dot b\cap \dot x$. In addition
 we can suppose that there is a pair $(\ell,q)\in \dot x$ such
that $r_1 < q$. The fact that $(\ell,q)\in \dot x$ means
there is a $p_\ell$ with $(\ell, p_\ell) $ in the name
 $\dot E\cap \dot y$ such that $q<_j p_\ell$. 
Since $r_1\in \mathbb P_{j,\gamma}$ and $r_1 <q$, 
 it follows that $r_1\not \perp p_\ell$. Now it follows
that $r_1$ has an extension forcing that $\ell\in \dot b\cap (\dot
E\cap \dot y)$ which is a contradiction.
\end{proof}

\begin{fact} $i_{\dot y} = i_\alpha$ and so\label{ivalue} also $i_p<i_\alpha$.
\end{fact}

\def\proofname{Proof of Fact \ref{ivalue}}

\begin{proof}
Since $\mathbb P_{i,\alpha+1} = \mathbb P_{i,\alpha}$ for
$i<i_\alpha$, we  have that $i_\alpha \leq i_{\dot y}$.
Now assume that $i_\alpha <i_{\dot y} $ and we proceed much as we did
in Fact 3 to prove that $i_p$ does not exist.
 Assume that $r<p$ (in
 $\mathbb P_{i_{\dot y},\alpha+1}$ and $r\Vdash \dot b\cap (\dot E\cap
 \dot y)$ is empty for some $\dot E\in M\cap \langle \mathcal E_1\rangle
\cap \mathbb
 B_{i_{\dot y},\gamma}$ and $\dot b\in \mathbb B_{i_p,\gamma}$. 
It follows from Lemma \ref{addsmall} that we can simply assume
that  $\dot E\in \mathcal E_1\cap \mathbb
 B_{i_{\dot y},\alpha+1}$, and similarly that
 $\dot b\in \mathbb B_{i_p,\alpha+1}$.

 Let $\dot T_{\alpha}$ be the $\mathbb P_{i_{\dot
    y},\alpha}$-name such that $r\restriction \alpha \Vdash
 r(\alpha) = \dot T_\alpha \in \mathbb L(\mathfrak D^\alpha_{i_{\dot
     y}})$. We may assume that there is a $t_\alpha \in
 \omega^{<\omega}$ such that $r\restriction\alpha \Vdash 
 t_\alpha = \sakne(\dot T_\alpha)$.

Choose any $M\cap \mathbb P_{i_\alpha,\alpha}$-generic filter $\bar G$
such that $r\restriction\alpha \in \bar G^+$. Since $\mathbb
P_{i_\alpha,\alpha}$ is ccc and $M^\omega\subset M$, it follows
that $M[\bar G]$ is closed
under $\omega$-sequences in the model
 $V[\bar G]$. 

In this model, define an $\mathbb L(\mathfrak D^\alpha_{i_\alpha})$-name
 $\dot x$. A pair $(\ell,T_\ell)\in \dot x$ if
 $t_\alpha\leq\sakne(T_\ell)\in T_\ell\in 
\mathbb L(\mathfrak D^\alpha_{i_\alpha})$ and for each $\sakne(T_\ell)\leq t\in
 T_\ell$, there is a pair $(\ell,q_{\ell,t})\in M $ in the name
$ (\dot y\cap \dot E)$ such that $q_{\ell,t}\restriction\alpha\in \bar
G^+$, $q_{\ell,t}\restriction\alpha\Vdash t =
\sakne(q_{\ell,t}(\alpha))$, 
and $(q_{\ell,t}\restriction\alpha \wedge r\restriction \alpha)$ does not
force (over the poset $ \bar G^+$~) that $t\notin \dot
T_\alpha$. We will show that   $r$ forces over
the poset $\bar G^+$ that $\dot x$ contains $\dot E\cap \dot
y$
and that $\dot x\cap \dot b$ is empty. This proves that $p$
forces that $\langle\mathcal E_1\rangle
\cap \mathbb B_{i_p,\alpha+1}$ generates
$\langle\mathcal E_1\cup\{\dot y\}\rangle
 \cap \mathbb B_{i_p,\alpha+1}$
since $\dot x$ must be forced to be in $\langle\mathcal E_1\rangle$.
 It then follows from Lemma \ref{addsmall}
 that  $\mathcal E_1\cap \mathbb B_{i_p,\gamma}$ generates
$\langle \mathcal E_1\cup\{\dot y\}\rangle \cap \mathbb
B_{i_p,\gamma}$,
 contradicting the assumption on $i_p$.

To prove that $r$ forces that $\dot x$ contains $\dot y\cap \dot E$, we
consider any  $r_\ell < r$ that forces over $ \bar G^+$
that $\ell\in 
\dot y\cap  \dot E
$. We may choose $(\ell,p_\ell) \in M$ in the name
$ (\dot E\cap \dot y)$ such that (wlog)
$r_\ell<p_\ell$. We may assume that $r_\ell\restriction \alpha$
forces a value $t$ on $\sakne(r_\ell(\alpha)) $
and that this equals $\sakne(p_\ell(\alpha))$. Now show there
is a $T_\ell\in \mathbb L(\mathfrak D^\alpha_{i_\alpha})$. 
In fact,  assume $t\in T_\ell$ with $q_{\ell,t}$ as the witness.
 Let $L^- = \{ k : t^\frown k \notin T_\ell\}$;  it suffices to show
 that $L^- \notin (\mathfrak D^\alpha_{i_\alpha})^+$.

By assumption
that $q_{t,\ell}$ is the witness, there is an $r_t <
(q_{\ell,t}\restriction \alpha \wedge r\restriction \alpha)$ such
that $r_t \Vdash t\in \dot T_\alpha$
and $r_t\Vdash t = \sakne(q_{\ell,t}(\alpha))$. 
By strengthening $r_t$ we can assume that $r_t$ forces a value
 $\dot D\in \dot{\mathcal D}^\alpha_{i_{\dot y}}$ on 
 $\{ k : t^\frown k \in \dot T_\alpha\cap q_{\ell,t}(\alpha)\}$. 
But now, it follows that $r_t$ forces that $\dot D$
is disjoint from $L^-$ since if $r_{t,k}\Vdash k\in \dot D$ 
for some $r_{t,k}<r_t$, $r_{t,k}$ is the witness to 
$(\ell,q_{\ell,t^\frown k})$ is in $(\dot y\cap \dot E)$ etc.,
 where $q_{\ell,t^\frown k}\restriction \alpha =
 q_{\ell,t}\restriction\alpha$ and $
q_{\ell,t^\frown k}( \alpha ) =  (q_{\ell,t}(\alpha))_{t^\frown k}$.
Since some condition forces that $L^-$ is not in 
 $(\dot {\mathfrak D}^\alpha_{i_{\dot y}})^+$ it follows
that $L^-$ is not in 
 $(\dot {\mathfrak D}^\alpha_{i_\alpha})^+$

Finally we must  show that $r$ forces over $\bar G^+$
that $\dot b$ is disjoint from  $\dot x$.
Since each are $\mathbb P_{i_p,\alpha+1}$-names, it suffices
to assume  that 
 $\bar r\in \bar G^+$  is some $\mathbb P_{i_p,\alpha+1}$-reduct of
 $r$ that forces some $\ell$ is in $\dot b\cap \dot x$,
and to 
 then show that 
$r$ fails to force that $\ell\notin \dot b\cap (\dot E\cap \dot y)$.  
Choose $(\ell,q_{\ell,t})\in (\dot y\cap \dot E)$ witnessing that
 $\bar r\Vdash \ell\in \dot x$. That is, we may assume that
 $\bar r \restriction \alpha \Vdash t =\sakne(\bar r(\alpha))$,
that  $q_{\ell,t}\restriction \alpha \in \bar G^+$, 
 and $(q_{\ell,t}\wedge r\restriction\alpha)$ does not force
over $\bar G^+$ that $t\notin \dot T_\alpha$. Of course this means
that the condition $\bar r \wedge r \wedge [[t\in \dot
T_\alpha]]\wedge q_{\ell,t}$ is not $0$. This condition forces
that $\ell$ is in $\dot b\cap (\dot E\cap \dot y)$ as required.
\end{proof}

\egroup

\begin{fact} The character\label{character}
 of $\mathfrak D^\alpha_{i_\alpha}$ is
  greater than $\mu_{i_\gamma}$.
\end{fact}

\bgroup
\def\proofname{Proof of Fact \ref{character}}

\begin{proof}

We know that $\mathfrak D^{\alpha}_{i_\alpha}$ is forced
to have an $\omega$-closed base (in fact, descending mod finite with
uncountable cofinality). Even more, $\mathbb P_{i_\alpha,\alpha}$
forces
that for all $T\in \mathbb L(\mathfrak D^\alpha_{i_\alpha})$, there is a
$D\in \mathfrak  D^\alpha_{i_\alpha}$ such that the condition
$([D]^{<\omega})_{\sakne(T)}$ is below $ T$.  Let $\chi_\alpha$ be the
cofinality of $\alpha$ and
fix a list $\{ \dot D_\beta : \beta < \chi_\alpha\}\in M$ (closed
under mod finite changes)
of 
$\mathbb
P_{i_\alpha,\alpha}$-names of elements of $\dot{\mathfrak
  D}^\alpha_{i_\alpha}$ that is forced to be a base.

Now, suppose that $\dot b\in
\mathbb B_{i_p,\alpha+1} = \mathbb B_{i_p,\alpha}$ and
 there is an $\dot E\in \mathcal E_1$ and an $r<p$ forcing
that $\dot b\cap (\dot E\cap \dot y)$ is empty.  We prove there is an 
$\dot x\in \mathcal E_1$ and an $r_2<r\restriction \alpha$ in 
 $\mathbb P_{i_\alpha,\alpha}$ such that $r_2 \Vdash \dot b \cap \dot
 x$ is empty. We may assume that $r_2$ forces a value $t$ 
on $\sakne(r(\alpha))$ and that, for some $\beta <\chi_\alpha$,
 $r_2\Vdash (\dot D_\beta^{<\omega})_t < r(\alpha)$. 
Let \[\dot x = \{ (\ell, q_\ell\restriction\alpha) : 
 (\ell,q_\ell)\in (\dot E\cap \dot y)~~\mbox{and}\ \ 
 q_\ell\restriction\alpha \Vdash q_\ell(\alpha) \leq (\dot
 D_\beta^{<\omega})_t \}~.\]
It is immediate that
 $\dot x\in M$ and that
 $(r_2\wedge r)
\Vdash_{\mathbb P_{i_\alpha,\alpha+1}} \dot x\supseteq (\dot E\cap
\dot y)$.  Since $\dot E\cap \dot y$ is forced to be 
 in $\mathcal E_1^+$, it follows that $\dot x$ is forced
by $r_2$ 
 to be
 in $\langle\mathcal E_1\rangle$.  Now we verify that
 $r_2\Vdash \dot b\cap \dot x$ is empty. Assume that
 $r_3<r_2$ in $\mathbb P_{i_\alpha,\alpha}$ and that $r_3\Vdash
 \ell\in \dot b\cap \dot x$. We may assume there
is $(\ell,q_\ell\restriction\alpha)\in \dot x$ such that $r_3 
 < q_\ell\restriction\alpha$. But now $r_2 \Vdash q_\ell(\alpha) \leq
 r(\alpha)$ and so $r_2\wedge r\Vdash \ell \in \dot b\cap (\dot E\cap
 \dot y)$ -- a contradiction.

The conclusion now follows from Lemma \ref{addsmall}.
\end{proof}

\egroup

\begin{definition} For each $t\in \omega^{<\omega}$, define that 
 $\mathbb P_{i_\alpha,\alpha}$-name $\dot E_t$ according to the rule
that $r\Vdash \ell\in \dot E_t$ providing $r\in 
 \mathbb P_{i_\alpha,\alpha}$ forces that
there is a $\dot T$ with\label{Es} 
 $r\Vdash \dot T\in \mathbb L(\dot{\mathfrak D}^\alpha_{i_\alpha})$,
 $r\Vdash t=\sakne(\dot T)$, and $r \cup \{(\alpha,\dot T)\}\Vdash
 \ell\notin \dot y$.
\end{definition}

\begin{fact} There is a $\dot T\in \mathbb L(\dot{\mathfrak
    D}^\alpha_{i_\alpha})\cap M$ such that $p\restriction \alpha$
forces\label{EsinE} the statement:\\
{$\dot E_t\in \mathcal E_1$ for all $t$ such that
 $\sakne(\dot T)\leq t\in \dot T$~.}
\end{fact}

\bgroup
\def\proofname{Proof of Fact \ref{EsinE}}

\begin{proof}
By elementarity, there is a maximal antichain of 
 $\mathbb P_{i_\alpha,\alpha}$ each element of which decides
if there is a $\dot T$ with $\dot E_t\in \mathcal E_1$ for
 all $t\in \dot T$ above $\sakne(\dot T)$. 
Since $p\in A\setminus A_1$ it follows that 
there is an $i_p<i_\alpha$ as in  condition (2) of
Fact 2.  Let $t_0\in \omega^{<\omega}$ so that
$p\restriction\alpha\Vdash t_0=\sakne(p(\alpha))$. 
 By the maximum principle, there is
 a $\dot b\in \mathbb B_{i_p,\gamma}$ and a $\dot E_0\in \mathcal E_1$ 
 satisfying that
 $p\Vdash \dot b\cap \dot E_0\cap  \dot y$ is empty,
 while $p\Vdash \dot b\cap \dot E$ is infinite for all $\dot E\in
 \langle\mathcal E_1\rangle$. This means
that $p$
forces that $\dot b\cap \dot E_0$ is an element of 
 $\langle\mathcal E_1\rangle^+$ that is contained in $\omega\setminus
 \dot y$.  
As in the proof of Lemma \ref{addsmall}, there is an $\dot E_2\in 
\langle \mathcal E_1\rangle \cap \mathbb B_{i_p,\gamma}$ such that
 $p$ forces that $\dot b\cap \dot E_2$ is contained in $\dot E_0$. 
We also have that $(\dot b\cap \dot E_2)\restriction \alpha$ is forced
to be contained in $\omega\setminus \dot y$. It now follows that 
$p\restriction\alpha$ forces that for
all $t_0\leq t\in p(\alpha)$, $p\restriction \alpha $ forces
that $\dot E_t $ contains $(\dot b\cap \dot E_2)\restriction\alpha$
and so  is in $\langle\mathcal E_1\rangle^+$. Since $\dot E_t$ is
also measured by $\mathcal E_1$, we have that $p\restriction\alpha$
forces that such $\dot E_t$ are in $\mathcal E_1$. This completes the
proof.  
\end{proof}

\egroup

\bigskip

Now we show how to extend $\mathcal E_1\cap \mathbb B_{i_\alpha,\gamma}$ so
as to measure $\dot y$. Let $\beta = \sup(M\cap \alpha)$. By Fact
\ref{character}, $\beta <\alpha$ and by the definition of 
 $\mathcal H(\vec\lambda)$, $M\cap \dot{\mathfrak D}^\alpha_{i_\alpha}
 $ is a subset of $\langle \dot{\mathfrak D}^\beta_{i_\beta}\rangle$,
 $\dot L_\beta\in \dot{\mathfrak D}^\alpha_{i_\alpha}$, and
 $i_\beta = i_\alpha$. We also have that the family
 $\{ \dot L_\xi : \cf(\xi)\geq\omega_1 \ \mbox{and}\ 
\beta_\alpha \leq \xi \in M\cap \beta\}$ is a base
 for
 $\dot {\mathfrak D}^\beta_{i_\beta}$.  For convenience
 let $q<_M p$ denote the relation that $q$
 is an $M\cap \mathbb P_{i_\alpha,\alpha+1}$-reduct of $p$.
Let $\bar p$ be any condition in $ \mathbb P_{i_\beta,\beta+1}$
satisfying that  $\bar p\restriction \beta = p\restriction \alpha$
and $\bar p\restriction \beta \Vdash \sakne(\bar p(\beta)) =
 t_\alpha$ (recall that $p\restriction\alpha\Vdash t_\alpha = 
 \sakne(p(\alpha)$).

Let us note that for each $q\in M\cap \mathbb P_{\alpha,i_\alpha+1}$,
 $q\restriction\alpha = q\restriction\beta$
and $q\restriction \beta\Vdash q(\alpha)
$ is also  a 
$\mathbb P_{\beta,i_\beta}$-name of an element of $\mathbb
 L(\dot{\mathfrak D}^\beta_{i_\beta})$. 
 Let $\dot x$ be the following $\mathbb
 P_{i_\beta,\beta+1}$-name 
\[\dot x  = \{ (\ell,q\restriction \beta \cup \{(\beta,q(\beta))\} ): 
   (\ell, q)\in \dot y\cap M \ \mbox{and} \  q <_M p\}~.\]

We will complete the  proof by showing  that there is an extension of 
 $p$  that forces that 
 $\mathcal E_1\cup \{\omega\setminus (\dot x[\dot L_\beta])\}$
measures $\dot y$ and that $1$ forces that 
 $\langle\mathcal E_1\cup \{\omega\setminus (\dot x[\dot
 L_\beta])\}\rangle
 \cap \mathbb B_{i_{\dot y},\beta+1}$
is $\vec \lambda(i_\gamma)$-thin. Here
 $\dot x[\dot L_\beta]$ abbreviates the $\mathbb P_{i_\beta,\beta+1}$-name
\[ \{ (\ell, r) : (\exists q)~~ (\ell,q)\in \dot x, \
q\restriction\beta = r\restriction \beta, 
 \  \mbox{and} \ r
\Vdash \sakne(q(\beta))\in \dot L_\beta^{<\omega}
\}.
\]
The way to think of $\dot x[\dot L_\beta]$ is that if $\bar p
 $ is  in some $\mathbb P_{i_\alpha,\alpha}$-generic filter $G$,
 then $\dot y[G]$ is now an $\mathbb L(\mathfrak
 D^\alpha_{i_\alpha})$-name, $L_\beta^{<\omega} = (\dot
 L_\beta[G])^{<\omega}$ is 
in $\mathbb L(\mathfrak D^\alpha_{i_\alpha})$,
 and $(\dot x[\dot L_\beta])[G]$ 
is equal to $\{ \ell :  L_\beta^{<\omega} \not\Vdash \ell\notin \dot
y\}$. We will use the properties of $\dot x$ to help show
that $\mathcal E_1\cup \{ \omega\setminus (\dot x[\dot L_\beta])\}$ is
$\vec \lambda(i_\gamma)$-thin. This semantic description of $\dot
x[\dot L_\beta]$ makes clear that $\bar p \cup \{(\alpha,
 (\dot L_\beta)^{<\omega})\} \in \mathbb P_{i_\alpha,\alpha+1}$ forces
that $\dot x[\dot L_\beta]$ contains $\dot y$. This implies that
$\mathcal E_1\cup \{ \omega\setminus (\dot x[\dot L_\beta])\}$  
measures $\dot y$.

Claim:  It is forced by $\bar p$ that $\omega\setminus \dot x$ is not 
measured by $\mathcal E_1$.

Each element  of $\mathcal E_1$ is in $M$ and simple elementarity will
show that for any condition in $q$ in $M$ that forces 
 $\dot E\cap (\omega\setminus \dot y)$ is infinite,
 the corresponding $\bar q = q\restriction \alpha
\cup \{(\beta, q(\alpha))\}$  will also force
that 
 $\dot E\cap (\omega\setminus \dot x)$ is infinite.

It follows from Fact \ref{character}, with $\omega\setminus \dot x$
playing the role of $\dot y$, that $\mathcal E_1\cup 
\{\omega\setminus \dot x\}$ is $\vec\lambda(i_\gamma)$-thin. Recall
that $q\Vdash \dot x = \emptyset$ for all $q\perp \bar p$.
Now to prove that $\mathcal E_1\cup \{\omega\setminus (\dot x[\dot
L_\beta])\} $ is also $\vec \lambda(i_\gamma)$-thin, we prove that 
\[\langle \mathcal E_1\cup \{\omega\setminus \dot x\}\rangle \cap
\mathbb B_{i,\alpha} 
 = 
\langle \mathcal E_1\cup \{\omega\setminus (\dot x[\dot
L_\beta])\}\rangle \cap \mathbb B_{i,\alpha} \]
for all $i< i_\alpha$. 
In fact, first we prove 
\[\langle \mathcal E_1\cup \{\omega\setminus \dot x\}\rangle \cap
\mathbb B_{i,\beta} 
 = 
\langle \mathcal E_1\cup \{\omega\setminus (\dot x[\dot
L_\beta])\}\rangle \cap \mathbb B_{i,\beta} \]
for all $i< i_\alpha$.

We begin with this main Claim.

\begin{claim}  If  $\dot b\in \mathbb B_{i,\beta}$ ($i<i_\beta$)
and there is an $\dot E\in \mathcal E_1\cap\mathbb
B_{i_\alpha,\beta}$ and a 
 $\bar p \geq q\in \mathbb P_{i_\beta,\beta+1}$ such that 
  $q \Vdash \dot b \cap  (\dot E\setminus \dot x)= \emptyset$
then 
 $q\restriction \beta \Vdash (\exists \dot E\in \mathcal E_1)~~
 \dot b \cap \dot E=\emptyset$.
\end{claim}

\bgroup
\def\proofname{Proof of Claim:}
\begin{proof}
We may assume that $q\restriction \beta$ forces a value
 $t$ on $\sakne(q(\beta))$. 
Recall that $q\restriction \beta$ forces the statement:
there is a $\dot D
\in M\cap \dot {\mathfrak D}^\alpha_{i_\alpha}$ such that 
$(\dot D^{<\omega})_{t}\leq q(\beta)$. The definition of 
 $\dot x$ ensures that $q\restriction \beta\cup \{(\alpha, 
 (\dot D^{<\omega})_t )\}\Vdash \dot b \cap (\dot E\setminus \dot y)$ is
 empty. There is a $\mathbb P_{i_\alpha,\alpha}$-name $\dot E_1\in
 M$ such
 that
 $q\restriction \alpha\Vdash \dot E_1 = \{ \ell :
   ~~(\dot D^{<\omega})_t \not\Vdash \ell\notin (\dot E\setminus \dot
   y)\}$. By assumption $q\restriction \alpha \Vdash \dot E_1 \in
   \langle\mathcal E_1\rangle$. Since $\dot b$ is also a $\mathbb
   P_{i,\alpha}$-name, we have that $q\restriction \alpha \Vdash \dot
   b\cap \dot E_1 = \emptyset$. 
\end{proof}
\egroup

Now assume that $\dot b\in \mathbb B_{i_\beta,\beta}$ and 
$q\Vdash \dot b\cap (\dot E\cap (\omega\setminus (\dot x[\dot
L_\beta])))$ is empty for some $q<\bar p$ in $\mathbb
P_{i_\beta,\beta+1}$. By Lemma \ref{addsmall} it suffices
to assume that $\dot E\in \mathbb B_{i_\beta,\beta}$.
To prove that $q$ forces that $
\dot b\notin\langle \mathcal E_1\rangle^+$, it suffices
to prove that there is some $\dot E_1\in\mathcal E_1$ such
that 
$q\Vdash \dot b\cap (\dot E_1\cap (\omega\setminus \dot x))$ is
finite. We proceed by contradiction.

 We may again assume that $q\restriction\beta$ forces that $q(\beta)$
 is 
 $(\dot D^{<\omega})_t$ for some $t\supset t_\alpha$
and some $\dot D\in \dot{\mathfrak D}^\alpha_{i_\alpha}\cap M$. 
Let $H$ be the range of $t$.
Let, for the moment, $G$ be a $\mathbb P_{i_\alpha,\alpha}$-generic
filter with $q\in G$.  Now in $M[G]$ we have the value
$L_\beta$ of $\dot L_\beta$ and $H\subset L_\beta$. We can also let 
 $E$ denote the value of $\dot E[G]$. Recall
that for each $s\in H^{<\omega}$,
 $E_s $ denotes the set of $\ell\in E$ such that
there is some $T\in \mathbb L(\mathfrak
 D^\alpha_{i_\alpha})$ with $s =  \sakne(T)$ and 
 $T\Vdash \ell\notin \dot y$.  
We have shown in Fact \ref{EsinE} that
there is a $T\in \mathbb L(\mathfrak D^\alpha_{i_\alpha})\cap M$ such
that $E_s\in \mathcal E_1$ for all $s\in T$ above $\sakne(T)$. 
This means that there is an $\ell\in b\cap E$ such that
$\ell\in E_s$ for each of the finitely many suitable $s$.
For each $s$, choose $T_s\subset T$ witnessing $\ell\in E_s$.
As before, and since there are only finitely many $s$ involved,
 we can assume that $\dot T_s = (\dot D^{<\omega})_s$ for
some   $H\subset \dot D\in \dot{\mathfrak D}^\alpha_{i_\alpha}\cap M$
and we then
define an extension $q$ of $q$ so that
$q'(\beta) = ( \dot D^{<\omega})_{t_\alpha}$
ensures
that $(\dot L_\beta^{<\omega})_s< T_s$ for each $s$.
Note that
such a condition $q'$
we have that $q'\cup \{(\alpha,(\dot L_\beta)^<\omega)\}
$ forces that   $\ell\notin \dot y$. 
But
then it should be clear that
$q'$ that forces $\ell\notin \dot x[\dot L_\beta]$.  This contradicts
that $q$ forces $\ell\notin \dot b\cap (\dot E\cap (\omega
\setminus(\dot x[\dot L_\beta])))$.

\end{proof}

\begin{bibdiv}

\def\cprime{$'$} 

\begin{biblist}

\bib{BlassShelah}{article}{
   author={Blass, Andreas},
   author={Shelah, Saharon},
   title={Ultrafilters with small generating sets},
   journal={Israel J. Math.},
   volume={65},
   date={1989},
   number={3},
   pages={259--271},
   issn={0021-2172},
   review={\MR{1005010 (90e:03057)}},
   doi={10.1007/BF02764864},
}	
	
\bib{BrendleSingular}{article}{
   author={Brendle, J{\"o}rg},
   title={The almost-disjointness number may have countable cofinality},
   journal={Trans. Amer. Math. Soc.},
   volume={355},
   date={2003},
   number={7},
   pages={2633--2649 (electronic)},
   issn={0002-9947},
   review={\MR{1975392 (2004c:03062)}},
   doi={10.1090/S0002-9947-03-03271-9},
}

\bib{BrendleFischer}{article}{
   author={Brendle, J{\"o}rg},
   author={Fischer, Vera},
   title={Mad families, splitting families and large continuum},
   journal={J. Symbolic Logic},
   volume={76},
   date={2011},
   number={1},
   pages={198--208},
   issn={0022-4812},
   review={\MR{2791343 (2012d:03113)}},
   doi={10.2178/jsl/1294170995},
}

\bib{BrendleShelah642}{article}{
   author={Brendle, J{\"o}rg},
   author={Shelah, Saharon},
   title={Ultrafilters on $\omega$---their ideals and their cardinal
   characteristics},
   journal={Trans. Amer. Math. Soc.},
   volume={351},
   date={1999},
   number={7},
   pages={2643--2674},
   issn={0002-9947},
   review={\MR{1686797 (2000m:03111)}},
   doi={10.1090/S0002-9947-99-02257-6},
}

\bib{vDHandbook}{collection}{
   title={Handbook of set-theoretic topology},
   editor={Kunen, Kenneth},
   editor={Vaughan, Jerry E.},
   publisher={North-Holland Publishing Co., Amsterdam},
   date={1984},
   pages={vii+1273},
   isbn={0-444-86580-2},
   review={\MR{776619 (85k:54001)}},
}

\bib{charspectrum}{article}{
   author={Shelah, Saharon},
   title={The character spectrum of $\beta(\Bbb N)$},
   journal={Topology Appl.},
   volume={158},
   date={2011},
   number={18},
   pages={2535--2555},
   issn={0166-8641},
   review={\MR{2847327}},
   doi={10.1016/j.topol.2011.08.014},
}

\end{biblist}
\end{bibdiv}

\end{document}